\newtheorem{theorem}{Theorem}[section]
\newtheorem{lemma}{Lemma}[section]
\newtheorem{proposition}{Proposition}[section]
\newtheorem{remark}{Remark}[section]
\newcommand{\sumtwo}[2]{\sum_{\substack{#1 \\ #2}}} 
\newcommand{\calA}{\mathcal{A}}
\newcommand{\calC}{\mathcal{C}}
\newcommand{\calD}{\mathcal{D}}
\newcommand{\calG}{\mathcal{G}}
\newcommand{\calH}{\mathcal{H}}
\newcommand{\calL}{\mathcal{L}}
\newcommand{\calP}{\mathcal{P}}
\newcommand{\calU}{\mathcal{U}}
\newcommand{\frg}{\mathfrak{g}}
\newcommand{\frq}{\mathfrak{q}}
\newcommand{\frt}{\mathfrak{t}}
\newcommand{\frG}{\mathfrak{G}}
\newcommand{\bbC}{\mathbb{C}}
\newcommand{\bbE}{\mathbb{E}}
\newcommand{\bbL}{\mathbb{L}}
\newcommand{\bbN}{\mathbb{N}}
\newcommand{\bbP}{\mathbb{P}}
\newcommand{\bbR}{\mathbb{R}}
\newcommand{\bbZ}{\mathbb{Z}}
\newcommand{\sfe}{{\sf e}}
\newcommand{\bfE}{{\ensuremath{\mathbf E}} }
\newcommand{\bfP}{{\ensuremath{\mathbf P}} }
\newcommand{\bfZ}{{\ensuremath{\mathbf Z}} }
\newcommand{\bfq}{{\ensuremath{\mathbf q}} }
\newcommand{\bft}{{\ensuremath{\mathbf t}} }
\newcommand{\Zd}{\bbZ^d}
\newcommand{\Rd}{\bbR^d}
\newcommand{\Prob}{\mathbb{P}}
\newcommand{\inprod}[2]{\langle#1,#2\rangle_d}
\newcommand{\setof}[2]{\left\{#1 \,:\, #2 \right\}}
\newcommand{\defby}{\stackrel{\text{\tiny{\rm def}}}{=}}
\newcommand{\IF}[1]{\mathbf{1}_{\{#1\}}}
\newcommand{\cone}{Y}
\newcommand{\define}[1]{\textsf{#1}}
\newcommand{\lb}{\left(}
\newcommand{\rb}{\right)}
\newcommand{\lbr}{\left\{}
\newcommand{\rbr}{\right\}}
\def\1{\ifmmode {1\hskip -3pt \rm{I}}
\else {\hbox {$1\hskip -3pt \rm{I}$}}\fi} 
\newcommand{\df}{\stackrel{\Delta}{=}}
\newcommand{\ie}{\textit{i.e.}}
\newcommand{\eg}{\textit{e.g.}}
\newcommand{\Znf}[1]{Z_n^{#1}}
\newcommand{\Pnf}[1]{\bbP_n^{#1}}
\newcommand{\Znx}{Z_{n,x}}
\newcommand{\xl}{\xi_\lambda}
\newcommand{\xln}{\xi_{\lambda_0}}
\newcommand{\Kl}{{\mathbf K}_\lambda}
\newcommand{\Km}{{\mathbf K}_\mu }
\newcommand{\Kln}{{\mathbf K}_{\lambda_0}} 
\newcommand{\intKln}{\mathring{\mathbf K}_{\lambda_0}}
\newcommand{\be}{\begin{equation}}
\newcommand{\ee}{\end{equation}}
\newcommand{\wq}{\mathsf{w}_{\lambda ,\beta}^\theta}
\newcommand{\wa}{\mathbf{w}_{\lambda ,\beta}}
\newcommand{\Zxf}[1]{Z_x^{#1}}
\newcommand{\ESRW}{\bbE_{\scriptscriptstyle\rm SRW}}
\newcommand{\PSRW}{\bbP_{\scriptscriptstyle\rm SRW}}
\begin{document}

\begin{frontmatter}

\title{The Statistical Mechanics of Stretched Polymers}
\runtitle{The Statistical Mechanics of Stretched Polymers}

\begin{aug}

\author{\fnms{Dmitry}  \snm{Ioffe}\thanksref{a-di}\ead[label=e-di]{ieioffe@ie.technion.ac.il}}
  \and
  \author{\fnms{Yvan} \snm{Velenik}\thanksref{a-yv,t-yv}\ead[label=e-yv]{Yvan.Velenik@unige.ch}}

  \thankstext{t-yv}{Partially supported through Swiss National Science Foundation grant \#200020-121675.}

  \runauthor{D. Ioffe and Y. Velenik}

  \affiliation[a-di]{Technion}
  \affiliation[a-yv]{University of Geneva}

  \address{Faculty of Industrial Engineering and Management\\Technion\\Haifa 3200\\Israel.\\
  \printead{e-di}}

  \address{Department of Mathematics\\University of Geneva\\2-4, rue du Lièvre\\1211 Genève 4\\Switzerland.\\
  \printead{e-yv}}

\end{aug}

\begin{abstract}
We describe some recent results concerning the statistical properties of a self-interacting polymer stretched by an external force. We concentrate mainly on the cases of purely attractive or purely repulsive self-interactions, but our results are stable under suitable small perturbations of these pure cases. We provide in particular a precise description of the stretched phase (local limit theorems for the end-point and local observables, invariance principle, microscopic structure). Our results also characterize precisely the (non-trivial, direction-dependent) critical force needed to trigger the collapsed/stretched phase transition in the attractive case. We also describe some recent progress: first, the determination of the order of the phase transition in the attractive case; second, a proof that a semi-directed polymer in quenched random environment is diffusive in dimensions $4$ and higher when the temperature is high enough. In addition, we correct an incomplete argument from~\cite{IV08}.
\end{abstract}

\begin{keyword}[class=AMS]
\kwd[Primary ]{60K35}
\kwd[; secondary ]{82B26, 82B41}
\end{keyword}

\begin{keyword}
\kwd{Self-interacting polymer}
\kwd{phase transition}
\kwd{coarse-graining}
\kwd{Ornstein-Zernike theory}
\kwd{Invariance principle}
\kwd{Quenched disorder}
\end{keyword}

\tableofcontents

\end{frontmatter}

\section{Introduction}
The statistical mechanics of stretched polymers has already a long history~(see, \eg, \cite{deGennes,RubinsteinColby}) and has known a strong revival in recent years, thanks in particular to remarkable experimental progress (in particular, the development of micromanipulation techniques, such as optical tweezers and atomic force microscopy, that allow a direct experimental realization of such a situation, making it possible, for example, to measure precisely the force/extension relation for given polymer chains).

\bigskip
In the present review, the polymers are always assumed to be long, flexible chains, which means that we are studying the polymer chains at a scale large compared to their persistence length. It should be noted, however, that our techniques should be able to handle a non-trivial persistence length, or even the case of a semiflexible polymer, the persistence length of which is comparable to its length.

\begin{figure}
\begin{center}
\includegraphics[width=6cm]{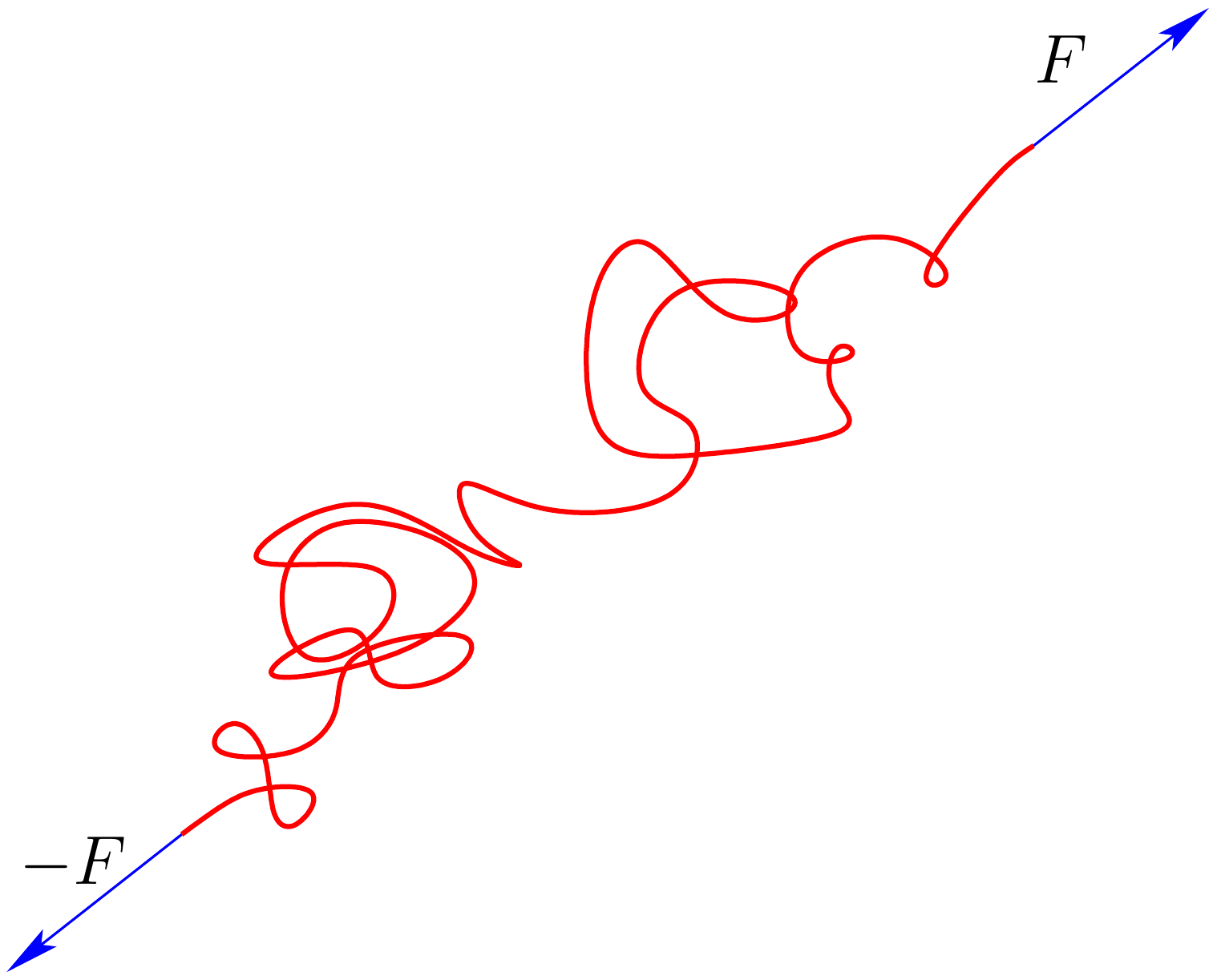}
\includegraphics[width=6cm]{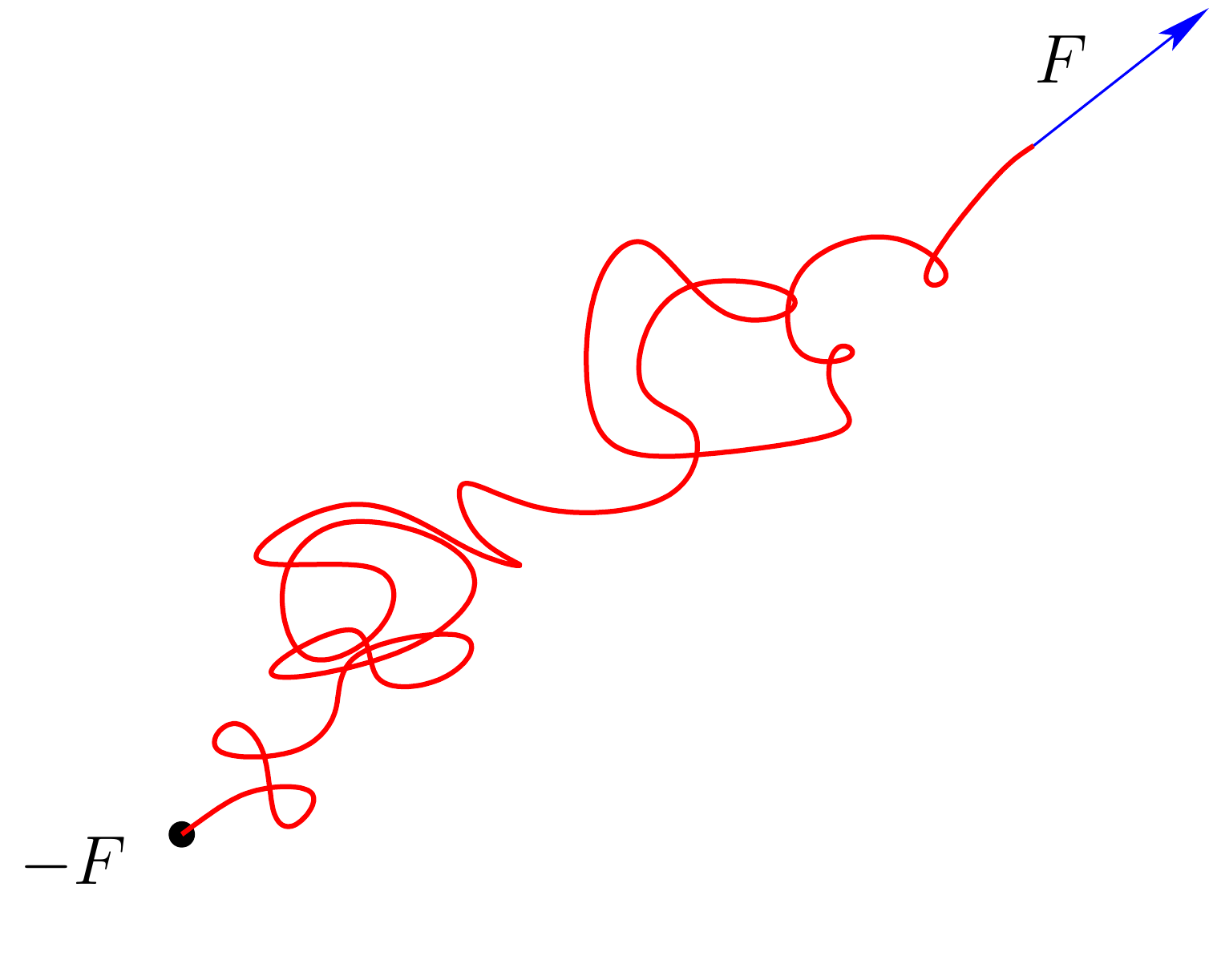}
\end{center}
\caption{Left: A polymer chain pulled at both end-points by opposite force $F$ and $-F$. Right: The equivalent setting considered in the present review, in which one of the end-point is pinned at the origin, the other one being pulled by a force $F$.}
\label{fig-setting}
\end{figure}
The physical situation we want to analyze is depicted in Figure~\ref{fig-setting}. Namely, we consider a long polymer chain, pulled by forces $F$ and $-F$ at its extremities. However, this setting is slightly inconvenient, as the spatial location of the polymer is not fixed. To lift the ambiguity, we fix one of the end-points at the origin, and apply a force $F$ to the other end-point. Obviously, the two descriptions are physically completely equivalent.

There are two contributions to the polymer energy: the first one is the internal energy due to the self-interaction, which will be denoted by $\Phi$; the second is the contribution due to the work done by the force, $-\inprod{F}{D}$, where $D$ is the total extension of the polymer (\ie, the position of its free end-point).

All this review, except Sections~\ref{sec_OrderTransition} and~\ref{sec_QuenchedDisorder} is based on the work~\cite{IV08}, in which proofs of the various statements made below can be found.

In Appendix~\ref{sec-newProof}, we provide an alternative to an incomplete argument we gave in~\cite{IV08}.

\paragraph{Acknowledgments.}
Y.V. is happy to thank Sacha Friedli and Bernardo de Lima for their wonderful organization of the XIIth Brazilan School of Probability, and the invitation to contribute this text.
This work was partially supported by Swiss NSF grant \#200020-121675.

\section{The model}
\subsection{Polymer configurations and their distribution}
A configuration of the polymer is given by a length $n$ nearest-neighbor path on $\Zd$, $\gamma=(\gamma(0),\ldots,\gamma(n))$, with $\gamma(0)=0$. Notice that there would be no problems in considering more general geometries (\eg, finite-range jumps).

To each configuration $\gamma$ of the polymer, we associate an internal energy $\Phi(\gamma)$ given by
$$
\Phi(\gamma) \defby \sum_{x\in\Zd} \phi(\ell_x(\gamma)),
$$
where $\ell_x(\gamma)$ is the local time of $\gamma$ at $x$, $\ell_x(\gamma) \defby \sum_{i=0}^{n} \IF{\gamma(i)=x}$. The function $\phi:\bbN\to\overline\bbR$ is a non-decreasing function satisfying $\phi(0)=0$.
\begin{remark}
Here, and in the following, we are considering local times at vertices. We could just as well have considered more general local potentials, \eg, local times through (possibly directed) edges.
\end{remark}
We shall consider two classes of interaction potentials $\phi$: attractive and repulsive. They are characterized as follows.
\begin{itemize}
\item \textbf{Repulsive self-interactions.} These are characterized by
$$
\phi(\ell_1+\ell_2) \geq \phi(\ell_1)+\phi(\ell_2).
$$
The terminology can be easily understood: The energetic cost of placing $\ell_1+\ell_2$ monomers at a given vertex $x$ is higher than that of placing $\ell_1$ monomers at a vertex $x$ and $\ell_2$ monomers at a different vertex $y$. Since lower energies are favored, this will induce a self-repulsion of the chain.
\item \textbf{Attractive self-interactions.} These are similarly characterized by
$$
\phi(\ell_1+\ell_2) \leq \phi(\ell_1)+\phi(\ell_2).
$$
The same argument as before shows that this induces a self-attraction of the chain. In the attractive case, we can, without loss of generality, make the following further assumption on $\phi$:
\begin{equation}\label{eq-sublinear}
\lim_{\ell\to\infty} \frac{\phi(\ell)}\ell = 0.
\end{equation}
Indeed, if $\phi$ was growing super-linearly, then the energy would always dominate the entropy (which obviously grows linearly with the chain length), and the polymer would always concentrate on two vertices. On the other hand, if there is a linear part in $\phi$, then it can be taken out, since the total length of the polymer is fixed, and thus the contribution of the linear part is independent of the polymer configuration. In the following, we shall always assume that~\eqref{eq-sublinear} is fulfilled when considering self-attractive interactions.
\end{itemize}

\begin{remark}
In this review, we only discuss attractive and repulsive potentials. Our results, however, are stable under perturbations. This is explained in details in~\cite{IV08}.

\end{remark}

Let now $F\in\Rd$ be the force applied at the free end-point. Our model is defined by the following probability measure on paths $\gamma=(\gamma(0),\ldots,\gamma(n))$,
$$
\Pnf{F}(\gamma) \defby \frac1{\Znf{F}}\, e^{-\Phi(\gamma)+\inprod{F}{\gamma(n)}}.
$$
Before finishing this section, let us mention how a few classical polymer models can be embedded into our framework.

\subsection{Some examples}
\label{ssec-examples}
\paragraph{The self-avoiding walk (SAW).} This is the standard model for self-repelling paths, with countless applications in polymer physics. It is defined by the probability measure giving equal weight to all length $n$ nearest-neighbor paths $\gamma$ on $\Zd$ satisfying the condition that no vertex is visited more than once. It is immediate to see that this corresponds to the particular choice
$$
\phi(\ell) =
\begin{cases}
\infty	&	\text{if $\ell\geq 2$}\\
0	&	\text{if $\ell\in\{0,1\}$}.
\end{cases}
$$

\paragraph{The Domb-Joyce model.} There are  numerous models of weakly self-avoiding walk, in which the hard-core self-avoidance condition characterizing the SAW is replaced by a soft-core penalty for multiple visits of a given vertex. The best-known such model is the Domb-Joyce model, in which the energy of a polymer chain $\gamma$ is given by
$$
\beta \sum_{0\leq i< j \leq n} \IF{\gamma(i)=\gamma(j)},
$$
for some $\beta>0$. It is easily verified that this corresponds to setting
$$
\phi(\ell) = \tfrac12\beta\ell(\ell-1).
$$
\paragraph{The discrete sausage.} This model has been studied in many papers under various names and interpretations. We chose this name in analogy with its continuous counterpart in which an underlying Brownian motion replaces the simple random walk. In this model, the energy of a polymer chain $\gamma$ is simply given by $\beta$ times the number of different vertices visited by $\gamma$. This corresponds to the particular choice
$$
\phi(\ell) =
\begin{cases}
\beta	&	\text{if $\ell\geq 1$}\\
0	&	\text{if $\ell=0$}.
\end{cases}
$$
The above interaction is obviously attractive.

\paragraph{The reinforced polymer.} This is a generalization of the previous model, formally analogous to a reinforced random walk. Let $(\beta_k)_{k\geq 1}$ be a non-increasing sequence of non-negative real numbers. The contribution to the energy of a given configuration $\gamma$ due to the $k^\text{th}$ visit at a given vertex is given by $\beta_k$. 
In other words, 
$$
\phi(\ell) = \sum_{k=1}^\ell \beta_k.
$$

\paragraph{The polymer in an annealed random potential.}
Another important example of polymer model with attractive self-interaction is given by a polymer in an annealed random potential. Let $(V_x)_{x\in\Zd}$, be a collection of i.i.d. non-negative random variables (the random potential). Given a realization $\theta$ of the environment, the quenched weight associated to a polymer configuration $\gamma$ is
\begin{equation}
\label{qweight}
\mathsf{w}_{\scriptscriptstyle\rm qu}^\theta (\gamma) \defby e^{-\sum_{i=0}^n V_{\gamma(i)}(\theta )}.
\end{equation}
It associates to each monomer the value of the potential at its location. The annealed weight of a polymer configuration corresponds to averaging the quenched weight w.r.t. the environment,
\begin{equation}
\label{aweight}
\mathsf{w}_{\scriptscriptstyle\rm an}(\gamma) \defby \mathbf{E} \mathsf{w}_{\scriptscriptstyle\rm qu}^\cdot (\gamma).
\end{equation}
Physically, this corresponds to a situation in which both the polymer and the environment have had time to reach equilibrium. It is easy to check that the probability measure associated to the annealed weights corresponds to choosing
$$
\phi(\ell) = -\log\bfE e^{-\ell V}.
$$

\subsection{The inverse correlation length}
In our analysis, a crucial role is played by the 2-point function and the associated inverse correlation length. The \define{2-point function} is defined, for $\lambda\in\bbR$ and $x\in\Zd$, by
$$
G_\lambda(x) \defby \sum_{\gamma:0\to x} e^{-\Phi(\gamma) - \lambda |\gamma|}\ \defby  
\sum_{\gamma:0\to x} W_{\lambda}  (\gamma ),
$$
where the sum runs over all nearest-neighbor paths $\gamma$ from $0$ to $x$ (of arbitrary length), and $|\gamma|$ denotes the length of $\gamma$. It is easy to see that $G_\lambda(x)$ is finite (for all $x\in\Zd$) as soon as $\lambda>\lambda_0$, where
$$
\lambda_0 \defby \lim_{n\to\infty} \frac1n \log \sum_{\substack{\gamma: |\gamma|=n\\ \gamma(0)=0}} e^{-\Phi(\gamma)}
$$
is essentially the free energy per monomer associated to a (free) polymer chain.
The above limit is well defined by sub-  (super-) additivity in the repulsive (attractive) case.
It can also be shown that $\lambda_0\in(0,\infty)$ always holds, with $\lambda_0=\log(2d)$ in the attractive case~\cite{Flury,IV08} 

Moreover, the 2-point function is infinite (for all $x\in\Zd$) when $\lambda<\lambda_0$, so that there is a transition at the critical value $\lambda_0$. Indeed, $\lambda <\lambda_0$ readily implies 
divergence of the bubble diagram 
$\sum_x G_\lambda (x)^2 = \infty$. In view of, \eg, (A.2) in \cite{IV08}, this already implies divergence
of the two-point function in the repulsive case. On the other hand in the attractive case
$G_\lambda (0) =\infty$ as soon as $\lambda <\lambda_0$. The divergence of the two-point 
function for every $x$ follows, since in the attractive case  $G_\lambda (x) \geq 
H_\lambda (x) G_\lambda (0)$, where $H_\lambda (x)>0$ is the contribution of all the paths
$\gamma :0\mapsto x$ which are stopped upon arrival to $x$.

When $\lambda>\lambda_0$, not only is the 2-point function finite, but it is actually exponentially decreasing as a function of $x$. This exponential decay is best encoded in the \define{inverse correlation length} $\xl:\Rd\to\bbR$, defined by
$$
\xl(x) \defby \lim_{k\to\infty} -\frac1k \log G_\lambda([kx]),\qquad (\lambda>\lambda_0)
$$
where $[x]\in\Zd$ denotes the component-wise integer part of $x\in\Rd$. It can be proved that the inverse correlation length is well-defined, and is an equivalent norm on $\Rd$. It measures the directional rate of decay of the 2-point function, in the sense that
$$
G_\lambda(x) = e^{-\xi(n_x)\|x\| (1+o(1))},
$$
where $n_x\defby x/\|x\|$ and the function $o(1)$ converges to zero as $\|x\|$ goes to infinity.

An important object associated to the correlation length $\xl$ is the \define{Wulff} shape $\Kl$, defined by
$$
\Kl \defby  \setof {F\in\Rd} {\inprod{F}{x}\leq \xl (x),\, \forall x\in\Rd}.
$$
The name Wulff shape is inherited from continuum mechanics, where $\Kl$ is the equilibrium crystal shape once $\xl$ is interpreted to be a surface tension (\ie, $\Kl$ is the convex set with support function $\xl$). Alternatively, one can describe $\Kl$ in terms of polar norms, as was done, \eg, in \cite{FluryLD}: Introducing the polar norm
$$
\xi^*_\lambda (F) \defby \max_{x\neq 0}\frac{\inprod{F}{x}}{\xl (x)} = \max_{\xl (x) =1} \inprod{F}{x},
$$
we see that $\Kl$ can be identified with the corresponding unit ball,
$$
\Kl =\, \setof{F}{\xi_\lambda^* (F) \leq 1}.
$$
In general, the family $(K_\lambda)_{\lambda\geq \lambda_0}$ is an increasing (w.r.t. inclusion) sequence of convex subsets of $\Rd$ (actually, convex bodies as soon as $\lambda>\lambda_0$).

\subsection{Behavior of the correlation length as $\lambda\downarrow\lambda_0$}
As will be explained in the next section, the behavior of $\xl$ as $\lambda\downarrow\lambda_0$ has an important impact on the behavior of the polymer under stretching. In this respect, the following dichotomy between attractive and repulsive models can be established:
$$
\xln(x) \defby \lim_{\lambda\downarrow\lambda_0} \xl(x)
\begin{cases}
\equiv 0	&	\text{in the repulsive case,}\\
> 0		&	\text{in the attractive case.}\\
\end{cases}
$$
We refer to \cite{Flury,IV08} for the attractive case. The repulsive part is 
worked out in the Appendix.
The behavior of $\xln$ has an immediate impact on the limiting shape $\Kln$: $\Kln$ has non-empty
interior in the attractive case, whereas $\Kln = \{0\}$ in the repulsive case.

\section{Macroscopic behavior of the polymer}
Let us say that the polymer is in the \define{collapsed phase} if and only if
$$
\lim_{n\to\infty} \Pnf{F}(\tfrac1n|\gamma_n| > \epsilon) = 0,\qquad \forall\epsilon>0.
$$
This means that, in the macroscopic scaling limit, the polymer has no extension. We call the complementary phase the \define{stretched phase}.

The following theorem describes the macroscopic behavior of the polymer depending on the intensity of the applied force.
\begin{theorem}
\label{thm_phtrans}
$$
\text{The polymer is }
\begin{cases}
\text{in the collapsed state if $F\in\intKln$,}\\
\text{in the stretched state if $F\not\in\Kln$.}\\
\end{cases}
$$
\end{theorem}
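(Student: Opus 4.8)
The plan is to reduce both claims to the behaviour, as $\lambda\downarrow\lambda_0$, of the stretched free energy
\[
\mathsf{f}(F)\;\defby\;\lim_{n\to\infty}\tfrac1n\log \Znf{F},
\]
which exists, exactly as $\lambda_0$ does, by super-multiplicativity of $\Znf{F}$ under concatenation in the attractive case (and sub-multiplicativity up to junction corrections in the repulsive case). Two observations pin down $\mathsf{f}$: reflecting $\gamma\mapsto-\gamma$ and using $\cosh\ge1$ gives $\Znf{F}\ge\Znf{0}$, hence $\mathsf{f}(F)\ge\lambda_0$; and writing $G_n(x)\defby\sum_{\gamma:0\to x,\ |\gamma|=n}e^{-\Phi(\gamma)}$, so that $\Znf{F}=\sum_{x}e^{\inprod{F}{x}}G_n(x)$ and $G_\lambda(x)=\sum_{n\ge0}G_n(x)e^{-\lambda n}$, Tonelli gives $\sum_{n\ge0}\Znf{F}e^{-\lambda n}=\sum_{x\in\Zd}e^{\inprod{F}{x}}G_\lambda(x)$, whose left-hand side is, by Cauchy--Hadamard, finite precisely for $\lambda>\mathsf{f}(F)$. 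For $\lambda\le\lambda_0$ the right-hand side is infinite, while for $\lambda>\lambda_0$ the asymptotics $G_\lambda(x)=e^{-\xl(n_x)\|x\|(1+\so)}$ together with $\xl$ being an equivalent norm show that it converges iff $\inprod{F}{\,\cdot\,}$ is \emph{strictly} dominated by $\xl$, i.e.\ iff $F\in\mathring{\mathbf K}_\lambda$ (for divergence, push $F$ out along a lattice ray on which $\inprod{F}{x}>\xl(x)$ and use the matching lower bound on $G_\lambda$). Hence $\mathsf{f}(F)=\inf\{\lambda>\lambda_0:F\in\mathring{\mathbf K}_\lambda\}$. Since $\xln=\lim_{\lambda\downarrow\lambda_0}\xl$, $\Kln=\bigcap_{\lambda>\lambda_0}\Kl$, and as the interior is monotone $\intKln\subseteq\bigcap_{\lambda>\lambda_0}\mathring{\mathbf K}_\lambda$; thus $F\in\intKln$ forces $\mathsf{f}(F)=\lambda_0$, whereas $F\notin\Kln$ gives $\lambda_1>\lambda_0$ with $F\notin{\mathbf K}_{\lambda_1}$, whence $F\notin\mathring{\mathbf K}_\lambda$ for all $\lambda\le\lambda_1$ and $\mathsf{f}(F)\ge\lambda_1>\lambda_0$. (The borderline $F\in\partial\Kln$, where $\mathsf{f}(F)=\lambda_0$ but the extension may have order smaller than $n$, is precisely what the theorem leaves out.)

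\textit{Collapsed phase.} Assume $F\in\intKln$; this set is empty in the repulsive case ($\xln\equiv0$), so we are in the attractive case, in which moreover $G_\lambda/G_\lambda(0)$ is super-multiplicative (concatenate paths, use sub-additivity of $\phi$, absorb revisits of the gluing point into the $G_\lambda(0)$ factor), so $G_\lambda(x)\le G_\lambda(0)\,e^{-\xl(x)}$. Here $\mathsf{f}(F)=\lambda_0$, so $\Znf{F}\ge\Znf{0}=e^{(\lambda_0+\so)n}$. For any fixed $\lambda>\lambda_0$, using $G_n(x)\le e^{\lambda n}G_\lambda(x)$,
\[
\Pnf{F}\bigl(\tfrac1n\|\gamma(n)\|>\epsilon\bigr)
=\frac{1}{\Znf{F}}\sum_{\|x\|>\epsilon n}e^{\inprod{F}{x}}G_n(x)
\;\le\;\frac{e^{\lambda n}}{\Znf{0}}\sum_{\|x\|>\epsilon n}e^{\inprod{F}{x}}G_\lambda(x).
\]
The crucial point is uniformity: since $\xl\ge\xln$ and $\xi^*_\lambda(F)\le\xi^*_{\lambda_0}(F)<1$ for all $\lambda\ge\lambda_0$, one has $\inprod{F}{x}-\xl(x)\le-(1-\xi^*_{\lambda_0}(F))\xln(x)\le-2\kappa\|x\|$ with $\kappa\defby\tfrac12(1-\xi^*_{\lambda_0}(F))\min_{\|y\|=1}\xln(y)>0$ \emph{independent of $\lambda$}; consequently $\sum_{\|x\|>\epsilon n}e^{\inprod{F}{x}}G_\lambda(x)\le G_\lambda(0)\sum_{\|x\|>\epsilon n}e^{-2\kappa\|x\|}\le c_\lambda\,e^{-\kappa\epsilon n}$ for $n$ large. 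Thus $\limsup_n\tfrac1n\log\Pnf{F}(\tfrac1n\|\gamma(n)\|>\epsilon)\le\lambda-\lambda_0-\kappa\epsilon$ for every $\lambda>\lambda_0$; letting $\lambda\downarrow\lambda_0$ gives $\le-\kappa\epsilon<0$, so $\Pnf{F}(\tfrac1n\|\gamma(n)\|>\epsilon)\to0$ for all $\epsilon>0$, i.e.\ the polymer is collapsed.

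\textit{Stretched phase.} Assume $F\notin\Kln$, so $\mathsf{f}(F)>\lambda_0$ and $\Znf{F}=e^{(\mathsf{f}(F)+\so)n}$. Fix $\lambda>\lambda_0$; then $C_\lambda:=\sup_xG_\lambda(x)\le\sum_yG_\lambda(y)<\infty$, so $G_n(x)\le C_\lambda e^{\lambda n}$, and bounding the number of lattice points in $\{\|x\|\le\epsilon n\}$ by $(Cn)^d$,
\[
\Pnf{F}\bigl(\tfrac1n\|\gamma(n)\|\le\epsilon\bigr)
=\frac{1}{\Znf{F}}\sum_{\|x\|\le\epsilon n}e^{\inprod{F}{x}}G_n(x)
\;\le\;\frac{C_\lambda(Cn)^d\,e^{(\lambda+\|F\|\,\epsilon)n}}{\Znf{F}}.
\]
Letting $\lambda\downarrow\lambda_0$, the right-hand side tends to $0$ for every $\epsilon<(\mathsf{f}(F)-\lambda_0)/\|F\|$; for such $\epsilon$ we get $\Pnf{F}(\tfrac1n\|\gamma(n)\|>\epsilon)\to1\ne0$, so the polymer is not collapsed, hence stretched.

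\textit{The main obstacle.} Everything soft rests on one genuinely non-trivial input, used in the first paragraph: that the exponential rate of $G_\lambda$ is the norm $\xl$ (with sub-exponential corrections harmless for the summability criterion) — the Ornstein--Zernike analysis — and that, accordingly, the whole collapsed/stretched split reduces to the dichotomy $\xln\equiv0$ versus $\xln>0$. The one delicate point in the deduction itself is the $\lambda$-uniform exponential decay in the collapsed estimate, which is precisely where the strict positivity of $\xln$ in the attractive regime is needed.
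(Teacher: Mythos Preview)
The paper does not itself prove Theorem~\ref{thm_phtrans}; it defers to~\cite{IV08}. Your argument is essentially correct and is in fact more elementary than the Ornstein--Zernike machinery of~\cite{IV08}, which is needed for the fine results of Section~4 (local limit theorems, invariance principle) but is overkill for the bare collapsed/stretched dichotomy. The generating-function identity $\sum_{n\ge0}\Znf{F}e^{-\lambda n}=\sum_{x}e^{\inprod{F}{x}}G_\lambda(x)$, combined with the rough asymptotics $G_\lambda(x)=e^{-\xl(x)(1+\so)}$, is exactly the right level of input here; the paper itself uses estimates in this spirit in Section~\ref{sec_OrderTransition} when bounding $\Znf{\alpha h}[\|D(\gamma)\|\le\rho n]$ from above and $\Znf{\alpha h}$ from below. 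What your route buys is a clean, uniform treatment of both regimes via the single scalar $\mathsf f(F)$; what it gives up is any information beyond the exponential scale.

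Two small points of phrasing that you should tighten. First, ``converges iff $F\in\mathring{\mathbf K}_\lambda$'' is not literally what you prove (the boundary case is undetermined from the rough asymptotics alone); what you actually establish, and all you ever use, are the two one-sided implications $F\in\mathring{\mathbf K}_\lambda\Rightarrow$ convergence and $F\notin\Kl\Rightarrow$ divergence, and these already pin down $\mathsf f(F)$ once you invoke $\xl\downarrow\xln$. Second, in the stretched-phase bound the sentence ``letting $\lambda\downarrow\lambda_0$, the right-hand side tends to $0$'' is misleading, since $C_\lambda\to\infty$ in that limit; what you mean (and what works) is to \emph{fix} any $\lambda\in(\lambda_0,\mathsf f(F)-\|F\|\epsilon)$ and then send $n\to\infty$. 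Similarly, in the collapsed-phase bound the constant $G_\lambda(0)$ may diverge as $\lambda\downarrow\lambda_0$ (e.g.\ in low dimension), so the honest statement is again: for each fixed $\lambda>\lambda_0$ take $\limsup_n\tfrac1n\log(\cdot)\le\lambda-\lambda_0-\kappa\epsilon$, \emph{then} let $\lambda\downarrow\lambda_0$---which is in fact how you phrase it there.
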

In the repulsive case, we have seen that $\Kln=\{0\}$, and we can thus conclude that any non-zero applied force results in the polymer being macroscopically stretched. In the attractive case, however, $\intKln\neq\varnothing$, and the polymer remains macroscopically collapsed for small enough applied forces. This corresponds to the intuition: In the attractive case, the self-interaction favors the collapsed phase, while the force favors the stretched phase, and the phase transition is a consequence of this competition.

\smallskip
Notice that Theorem~\ref{thm_phtrans} does not describe the behavior of the polymer when the force belongs to the boundary of $\Kln$. This critical case is delicate and is still under investigation in the case of attractive self-interaction; see also Section~\ref{sec_OrderTransition}. This question in the case of repulsive self-interaction reduces to proving that the critical polymer (\ie, without any applied force) is sub-ballistic. This is still an open problem, even for the SAW in dimensions $2,3$ and $4$! Of course, it should be possible (but less interesting) to establish diffusivity in high enough dimensions using a suitable version of the lace expansion.

\section{Description of the stretched phase}
We now turn to the description of the polymer in the stretched phase. We treat simultaneously the cases of attractive and repulsive self-interactions, as the results in that phase are identical. The only 
assumption is thus that $F\not\in\Kln$, or, equivalently, that 
\begin{equation}
\label{Conjugate}
F\in\partial\Kl\quad \text{ for some}\quad  \lambda >\lambda_0 .
\end{equation}
The above $\lambda$ should be viewed as a conjugate parameter.

\subsection{Distribution of the end-point}
\label{ssec-endpoint}
The first natural problem is to determine the distribution of the free end-point of the polymer.

The following result guarantees that the polymer is indeed in the stretched regime: There exists
  $\bar{v}_F\in\Rd\setminus\{0\}$, $\epsilon >0$ and $c>0$ such that
$$
\bbP_n^F\left(\tfrac1n\gamma(n) \notin B_\epsilon(\bar{v}_F)\right) \leq e^{-c n}.
$$
This can then be strengthened into a strong local limit result. More precisely, there exists a rate function $J_F$ that is strictly convex and real analytic on $B_\epsilon(\bar{v}_F)$ and possesses a non-degenerate quadratic minimum at $\bar{v}_F$, and a positive, real analytic function $G$ on $B_\epsilon(\bar{v}_F)$ such that
$$
\bbP_n^F(\tfrac1n\gamma(n)=x) = \frac{G(x)}{n^{d/2}}\, e^{-nJ_F(x)}\,(1+o(1)),\qquad\text{as $n\to\infty$,}
$$
uniformly in $x\in\tfrac1n\Zd\cap B_\epsilon(\bar{v}_F)$.

\subsection{Description of the microscopic structure}

\subsubsection{Decomposition into irreducible pieces}
\begin{figure}
\begin{center}
\includegraphics[width=\textwidth]{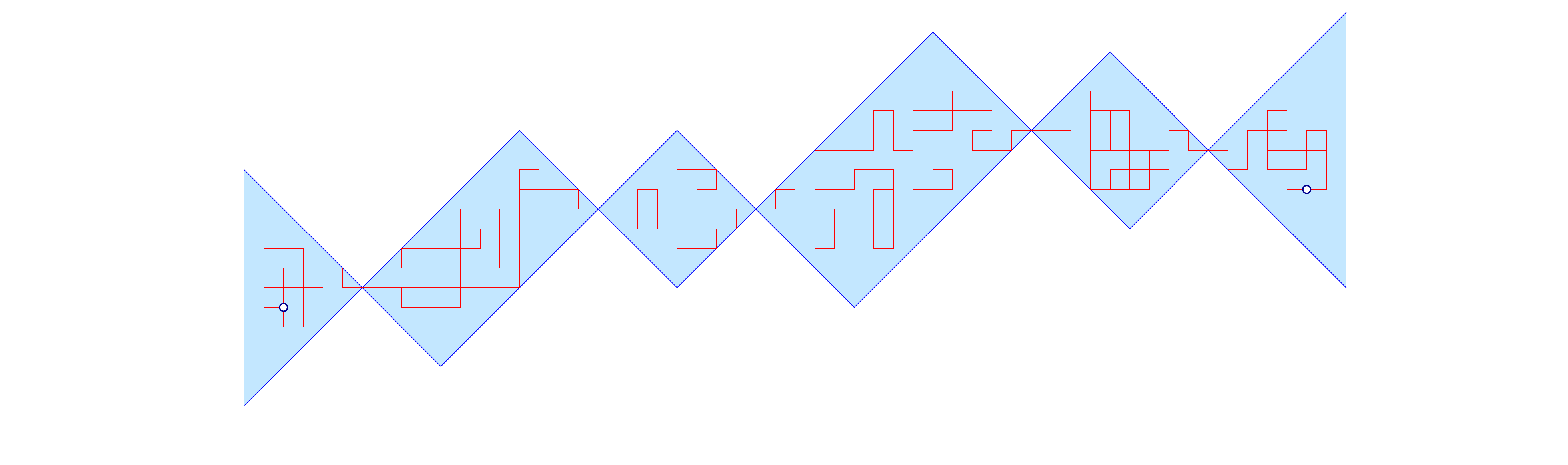}
\end{center}
\caption{The decomposition of a path $\gamma$ into irreducible pieces.}
\label{fig-irred}
\end{figure}

In the previous subsection, we have described the Gaussian fluctuations of the end-point of the polymer in the stretched phase. It is also natural, and physically relevant, as will be explained below, to describe the structure of the whole polymer chain at the microscopic scale. The crucial structural fact established in \cite{IV08} is that whenever the pulling force $F$ is strong in the sense of \eqref{Conjugate}, one is entitled to ignore all polymer configurations apart from those admitting a decomposition into irreducible pieces (see Fig.~\ref{fig-irred}),
\begin{equation}
\label{decomposition}
\gamma = \omega_{\rm\scriptscriptstyle L} \amalg \omega_1 \amalg\cdots\amalg\omega_m \amalg \omega_{\rm\scriptscriptstyle R},
\end{equation}
where $\amalg$ denotes concatenation. As Fig.~\ref{fig-irred} indicates, there are three types of irreducible pieces: the two extremal ones $\omega_{\rm\scriptscriptstyle L}, 
\omega_{\rm\scriptscriptstyle R}$, which could be viewed as boundary conditions,  and the bulk ones
 $\omega_1, \dots ,\omega_m$.
In either case, the size distribution of these pieces (\eg, diameter) has exponentially 
decaying tails \cite{IV08}. Consequently, the
boundary conditions have no impact on the large scale properties of such polymers and, for the 
sake of a more transparent notation, we shall ignore them
in the sequel, considering instead a simplified model with only bulk irreducible pieces present.
Let us describe the family $\Omega$ of the  latter.  Recall that 
we fixed the pulling force  $F$ and the conjugate 
parameter $\lambda >\lambda_0$, such that $F\in\partial\Kl$. For $\kappa\in(0,1)$ define a positive
cone $\cone \defby \setof{x\in\Zd}{|\inprod{x}{F}| > \kappa \xl(x) }$. It is convenient to choose  $\kappa$ sufficiently small to ensure that $\cone$ contains one of the neighbors of the origin.

Given a path $\gamma=(\omega (0),\ldots,\omega (l))$, we say that
 $\omega (k)$ is a 
\define{cone-point} of $\omega$ if $k=1, \dots, l-1$, and 
$$
(\omega (0),\ldots,\omega(k-1)) \subset \omega (k) - \cone \text{ and } (\omega (k+1),\ldots,\omega (l)) \subset \omega (k) + \cone.
$$
We say that a path $\omega$ is \define{irreducible} if it does not contain cone points and, in addition
is \define{cone-confined} in the following  sense, 
\begin{equation}
 \label{ConeConfine}
\omega \subseteq \lb \omega (0) + \cone\rb \cap \lb \omega (l) -\cone\rb .
\end{equation}
We denote by $\Omega$ the set of irreducible paths, identifying paths differing only by $\bbZ^d$ shifts. 

Note that the set $\Omega$ is adjusted to $F$ and, accordingly, to $\lambda$. However, the definitions
are set up in such a way that one may use the same set $\Omega$ for all 
pulling forces in a neighborhood
of $F$. This is important for application of analytic local limit theory, which is behind most
of the results we derive. There are two canonical variables associated with 
$\omega = (\omega (0) ,\dots ,\omega (l))\in\Omega$. The first one is the number of steps, $|\omega |=l$.
The second one is the displacement, $D (\omega ) = \omega (l ) - \omega (0 )$. Then, 
as was proved in \cite{IV08},
\[
 \bbP_\lambda^F (\omega ) = {\rm exp}
\lbr\phi (1) -\Phi (\omega ) -\lambda |\omega | +\inprod{F}{D (\omega )}\rbr
\]
is a probability measure on $\Omega$ with exponentially decaying tails: There 
exist $\nu_1 ,\nu_2 > 0$, such that uniformly in $l\geq 0$,
\begin{equation}
 \label{ExpTails}
\bbP_\lambda^F \lb | \omega | > l \rb
+\bbP_\lambda^F\lb  | D (\omega )|  > l \rb
 \leq \nu_1 {\rm e}^{-\nu_2 l} .
\end{equation}
 Consider the product measure 
$\otimes \bbP_\lambda^F$  on 
the space of countable strings\linebreak$\lb\omega_1 ,\omega_2 ,\dots \rb$ of elements 
$\omega_i\in\Omega$.   Recall our convention to 
 ignore boundary pieces in \eqref{decomposition}. Then one can express the canonical
partition function  $Z_n^F$ in terms of the grand-canonical measure $\otimes \bbP_\lambda^F$
as follows:
\begin{equation}
 \label{CPF1}
Z_n^F = {\rm e}^{\lambda n}\, \sum_N \otimes\Prob_\lambda^F \Bigl( \sum_{i=1}^N 
|\omega_i | = n\Bigr) .
\end{equation}
Similarly, for $x\in\cone$, one can express two point functions, 
\begin{equation}
 \label{CPF2}
G_\lambda (x) = {\rm e}^{-\inprod{F}{x}}\,
\sum_N
\otimes\Prob_\lambda^F \Bigl( \sum_{i=1}^N  D (\omega_i ) = x\Bigr) .
\end{equation}
In view of the Cramer-type condition \eqref{ExpTails}, relations \eqref{CPF1} and \eqref{CPF2} pave the way for a comprehensive local limit description of the microscopic geometry of polymer chains in the corresponding canonical ensembles. Applications for statistics of a general class of local observables are discussed in \cite{IV08}. For example one readily infers that, under $\bbP_n^F$, typical polymers $\gamma$ are composed of $\lb 1 + O (1/\sqrt{n})\rb n/\bbE_n^F |\omega |$ irreducible pieces whose lengths are at most $O (\log n )$.  In particular, the following invariance principle holds.

\subsubsection{An invariance principle}
As before, let $\lambda >\lambda_0$ and $F\in\partial\Kl$.  Then \cite{IV08} there exists an $\bbR^d$-neighborhood $\calU$ of $F$ such that the function $\mu = \mu (H)$, which is defined through the relation $H\in\Km$ (note that in this notation $\mu (F ) =\lambda$), is analytic on $\calU$ and, furthermore, the Hessian $\Sigma_F \df {\rm d}^2 \mu (F) $ is non-degenerate.

In terms of $\mu$ the average displacement per step is given by $\bar{v}_F = \nabla \mu (F )$~\cite{IV08}. Now, for a given $\gamma$ with $|\gamma | =n$, let $\gamma = \omega_1 \amalg\dots\amalg\omega_m$ be its irreducible decomposition. Again, recall that for simplicity we ignore boundary pieces in \eqref{decomposition}. With the irreducible decomposition at hand, we define the interpolated trajectory $\frg_n =\frg_n [\gamma ] : [0,1]\to\bbR^d$ as follows:
(1) Let $\frG_n :[0,n]\to \bbR^d$ be the linear interpolation through the space-time points
\[
 \bigl(0,0\bigr), \bigl( |\omega_1| ,D (\omega_1 )\bigr), \bigl(  |\omega_1| + |\omega_2|, 
 D (\omega_1 )+ D (\omega_2 )\bigr),\dots ,\bigl( n , \sum_1^m D(\omega_i )\bigr) .
\]

(2) For $t\in [0,1]$ define $\frg_n (t) = \lb \frG_n (tn )- tn\bar{v}_F\rb/\sqrt{n}$. 

Then $( \frg_n ,\bbP_n^F )$ weakly converges to the law of 
 $\sqrt{\Sigma_F} B_t$, $t\in [0,1]$, where
$B$ is the standard Brownian 
motion on $\bbR^d$.

\subsubsection{Pincus' blobs}
\begin{figure}
\begin{center}
\scalebox{.5}{\input{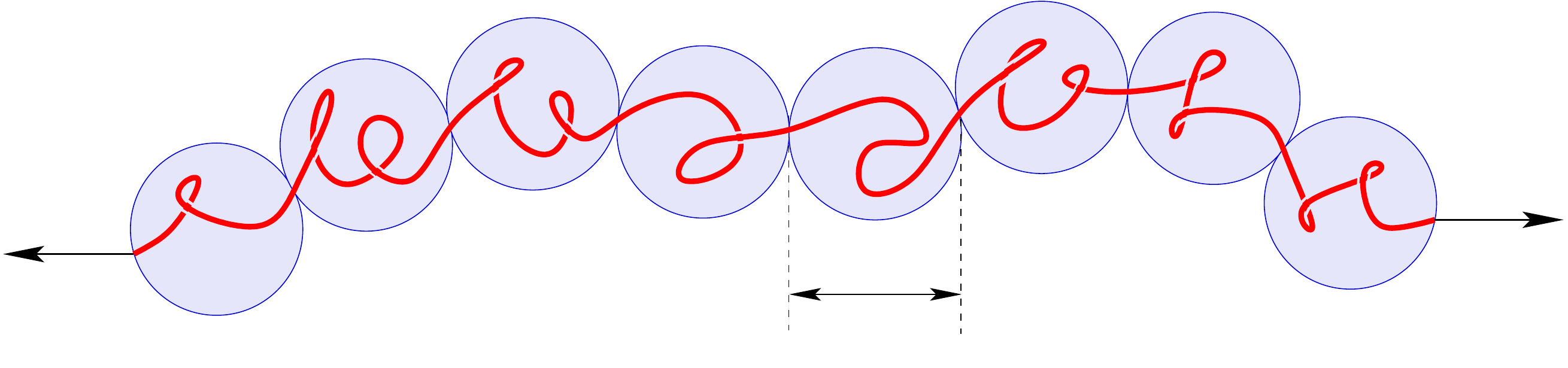_t}}
\end{center}
\caption{Pincus' blobs picture (adapted from~\cite{deGennes}).  Under tension, the polymer decomposes into a succession of independent ``blobs'' of a size given by the correlation length $1/\xi$. Inside each blob, the corresponding piece of polymer has the same scaling properties as a critical (\ie, without applied forces) polymer.}
\label{fig-Pincus}
\end{figure}

In 1976, in order to derive various scaling properties of stretched polymers (which he modeled by SAW), Pincus introduced a heuristic description of the polymer now known as \define{Pincus' blobs picture}. The latter assumes that, in the stretched phase, the polymer's structure is that of a string of ``blobs'', which possess (in the terminology used in the present paper) the following characteristic properties:
\begin{itemize}
\item statistical independence;
\item sizes of the order of the correlation length $\lambda(F)$;
\item the same scaling properties as non-stretched polymers.
\end{itemize}
It seems likely that the decomposition into irreducible pieces described in the previous subsection should yield the rigorous counterpart to this blobs picture, but a precise identification would require substantially more work.

On the positive side, we already know that the irreducible pieces are statistically independent (modulo the fact that their total length is fixed to $n$).

Concerning the sizes of the irreducible pieces, our estimates do not allow us to say that they are of the order of the correlation length. One does have a weak version of such a claim: Coarse-graining the polymer at the correlation scale yields an object that does indeed admit a decomposition into irreducible pieces of the right scale, but our estimates are too poor to be converted into estimates for the underlying microscopic object. This seems however to be a purely technical issue, and, at least for a subclass of these polymer models, one should be able to improve on the latter estimates.

The most difficult aspect seems to be the last one. It is not actually really clear to us what the statement exactly means, but it seems to require a good understanding of the collapsed phase, which seems to be quite difficult in general, although this might be easier in the attractive case (using, \eg, the technology described in~\cite{Sznitman}), or in high enough dimensions (using, \eg, the lace expansion).

A related question of interest is to understand how $\bar{v}_F$ scales with the applied force $F$. Pincus' conjecture is that $\bar{v}_F \propto F^{\chi}$, where $\chi=(1/\nu)-1$ with $\nu$ the exponent characterizing the growth of the free polymer, $\bbE_n(\|D\|) \propto n^{\nu}$. Of course, the determination (and even existence!) of the exponent $\nu$ itself is an open problem in general (\eg, for the SAW below the critical dimension).

\smallskip
In any case, providing a rigorous version of this heuristic picture would be very interesting for two reasons: First, its validity is taken as a basic assumption in many works in polymer physics (see~\cite{deGennes,RubinsteinColby} and references therein), and, second, its validity, especially in the regime when $Fn^\nu$ is \emph{not} very large (in our work, we only consider the regime in which $n>n_0(F)$, for some fixed $F$, so our results shed no light on this problem yet), is not considered obvious even from the point of view of Theoretical Physics~\cite{Maggs_et_al}.

\section{Order of the collapsed/stretched phase transition}
\label{sec_OrderTransition}

As we have seen above, there is a non-trivial phase transition between a collapsed and a stretched phase in the case of polymers with self-attractive interactions. In this section, we present some preliminary results describing in more details the behavior of the polymer at this transition.

The problem investigated is the determination of the order of the phase transition. Let us fix a direction $h\in\partial\Kln$ and consider a force of the form $F=\alpha h$, $\alpha\in\bbR^+$. The criterion used is the behavior of the macroscopic position of the free end-point of the polymer which, as we have seen, is given by $0$ in the collapsed phase ($\alpha<1$), and by $\bar{v}_F\neq 0$ in the stretched phase ($\alpha>1$). We say that the transition is of \define{first order} if the position is discontinuous at the transition: $\lim_{\alpha\downarrow 1} \bar{v}_F\neq 0$, and is of \define{continuous} otherwise. (The order turns out not to depend on the chosen direction $h$.)

This question has already been investigated in the Physics' literature~\cite{MeGr} (see also the earlier work~\cite{GrPr}) for the particular case of the discrete sausage, $\Phi=\beta\,\#\setof{\gamma(k)}{0\leq k \leq n}$ (which is equivalent to a random walk among annealed killing traps). The conclusion drawn by the authors for this particular model are the following: The transition is of continuous in the one-dimensional case, but of first order in all higher dimensions. Their argument was analytic in $d=1$, but relied entirely on numerical evidences for $d\geq 2$.

In the case of the Wiener sausage, a lot of information has been extracted about the behavior of the path in the collapsed regime: In dimension $1$, \cite{EiLa,Po1995} (and also~\cite{Po1997} for soft obstacles) provide a detailed description of the path for all subcritical forces. In higher dimensions, the results seem to be restricted to the case $F=0$, see~\cite{Sz} for the two-dimensional Wiener sausage and~\cite{Po1999} for the higher-dimensional Wiener sausage; similar results for the two-dimensional discrete sausage have been obtained in~\cite{Bo}. The order of the transition itself does not seem to have been investigated rigorously in these works.

It turns out, however, that it is easy to verify that the transition is continuous for the one-dimensional discrete sausage.
 Namely,
\begin{theorem}
Assume that $d=1$ and $\phi(\ell)=\beta\IF{\ell\geq 1}$ ($1d$ discrete sausage).
Let $h\in\partial\Kln$. There exist $a_1,a_2,a_3>0$ such that, for any $\epsilon>0$,
$$
\Pnf{\alpha h}\left( \tfrac1n D(\gamma) > \epsilon \right) \leq e^{-a_1 \epsilon^2 n},
$$
for all $n\geq a_2\beta\epsilon^{-3}\vee a_2\epsilon^{-2}|\log\epsilon|$ and all $\alpha>1$ satisfying $|\alpha-1|\leq a_3\epsilon/h$. In particular, $\lim_{\alpha\downarrow 1} \bar{v}_{\alpha h}=0$ and the transition is continuous.
\end{theorem}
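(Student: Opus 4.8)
The plan is to write the event probability as a ratio of restricted partition functions, $\Pnf{\alpha h}\bigl(\tfrac1n D(\gamma)\ge\epsilon\bigr)=\widetilde Z_n/\Znf{F}$ with $F=\alpha h$ and $\widetilde Z_n\defby\sum_{\gamma:\,D(\gamma)\ge\epsilon n}e^{-\Phi(\gamma)+\inprod{F}{\gamma(n)}}$, show that $\widetilde Z_n$ is smaller than $\Znf{F}$ by a factor $e^{-c\epsilon^2 n}$, and then read off the continuity statement from the end-point concentration of Subsection~\ref{ssec-endpoint}. Throughout write $R(\gamma)$ for the number of distinct sites visited, so $\Phi(\gamma)=\beta R(\gamma)$, and $\eta(p)\defby-p\log p-(1-p)\log(1-p)$, so that $\eta(\tfrac12)=\log2=\lambda_0$ and $\binom{n}{k}\le e^{n\eta(k/n)}$. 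A preliminary step — needed so that the force is only an $O(\epsilon)$ perturbation of $\beta$ — is to observe that $\partial\Kln=\{\pm\beta\}$ here, i.e. $h=\beta$: the bound $\xln(1)\ge\beta$ is immediate from $R(\gamma)\ge|x|+1$ for $\gamma:0\to x$, while for $\xln(1)\le\beta$ one exhibits, for small $v>0$, the $\asymp\binom{\lceil k/v\rceil}{(\lceil k/v\rceil-k)/2}$ walks from $0$ to $k$ of length $\lceil k/v\rceil$ that stay within $C\sqrt{k/v}$ of the straight segment (a positive fraction of all such bridges), whose range is $k+O(\sqrt{k})$; letting $\lambda\downarrow\lambda_0$ and then $v\downarrow0$ gives the claim. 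Hence $F-\beta=(\alpha-1)\beta\le a_3\epsilon$ under the hypothesis on $\alpha$.

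For $\widetilde Z_n$ one uses that every contributing $\gamma$ has $R(\gamma)\ge D(\gamma)+1$ (a $1$-dimensional nearest-neighbour path from $0$ to $D>0$ visits all of $\{0,\dots,D\}$), so that
\[
\widetilde Z_n\le e^{-\beta}\sum_{\epsilon n\le D\le n}\binom{n}{(n-D)/2}e^{(F-\beta)D}\le e^{-\beta}(n+1)\exp\Bigl(n\max_{\epsilon\le v\le1}g(v)\Bigr),\qquad g(v)\defby\eta\bigl(\tfrac{1-v}{2}\bigr)+(F-\beta)v .
\]
Now $g$ is strictly concave with $g(0)=\log2$ and $g'(0)=F-\beta$, and $\rho(v)\defby\log2-\eta(\tfrac{1-v}{2})$ is convex, vanishes with its derivative at $0$, and has $\rho''(v)=(1-v^2)^{-1}\ge1$; thus $\rho(v)\ge v^2/2$ and $v\mapsto\rho(v)/v$ is non-decreasing, which yields $g(v)=\log2-\rho(v)+(F-\beta)v\le\log2-(\tfrac12-a_3)\epsilon^2$ for all $v\ge\epsilon$. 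Taking $a_3=\tfrac14$ gives $\widetilde Z_n\le e^{-\beta}(n+1)\,2^n\,e^{-n\epsilon^2/4}$.

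The delicate point is a lower bound on $\Znf{F}$ that loses at most $e^{-O(\beta^{2/3}n^{1/3})}$ sub‑exponentially; the naive box of side $\sqrt n$ loses $e^{-O(\beta\sqrt n)}$ and would only produce a threshold $n\gtrsim\beta^2\epsilon^{-4}$. I would restrict the sum defining $\Znf{F}$ to nearest-neighbour loops (or, for $n$ odd, a loop of length $n-1$ followed by one step) confined to $[-L,L]$ with $L\defby\lceil(n/\beta)^{1/3}\rceil$, each of range $\le2L+1$; the number of length-$n$ loops at $0$ inside $[-L,L]$ equals $\langle\delta_0,A_L^n\delta_0\rangle$ for the path-graph adjacency $A_L$, and since the two endpoints coincide the eigenfunction expansion has only non-negative terms, so this is at least $(2\cos\tfrac{\pi}{2L+2})^n\psi_1(0)^2=(2\cos\tfrac{\pi}{2L+2})^n/(L+1)\ge 2^n e^{-\pi^2 n/(4L^2)}/(L+1)$. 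With the chosen $L$ both $\pi^2 n/(4L^2)$ and $\beta(2L+1)$ are $O(\beta^{2/3}n^{1/3})$, giving $\Znf{F}\ge c\,n^{-1}2^n\exp\bigl(-C(\beta^{2/3}n^{1/3}+\beta)\bigr)$ for absolute $c,C>0$ (the factor $e^F\le e^{2\beta}$ from the extra step in the odd case is harmless).

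Dividing, $\Pnf{\alpha h}\bigl(\tfrac1n D(\gamma)\ge\epsilon\bigr)\le c^{-1}n^2\exp\bigl((C-1)\beta+C\beta^{2/3}n^{1/3}-n\epsilon^2/4\bigr)$, which is $\le e^{-\epsilon^2 n/8}$ as soon as $\tfrac18\epsilon^2 n$ dominates each of $(C-1)\beta$, $C\beta^{2/3}n^{1/3}$ and $2\log n+\log(1/c)$: the first two hold once $n\ge a_2\beta\epsilon^{-3}$ (using $\beta\epsilon^{-2}\le\beta\epsilon^{-3}$ for $\epsilon<1$ and solving $n^{2/3}\gtrsim\beta^{2/3}\epsilon^{-2}$), the third once $n\ge a_2\epsilon^{-2}|\log\epsilon|$ (as $\log n/n$ is decreasing). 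The opposite event $\{D(\gamma)\le-\epsilon n\}$ is easier, since there $\inprod{F}{\gamma(n)}\le0$ forces $e^{-\Phi(\gamma)+\inprod{F}{\gamma(n)}}\le e^{-\beta}e^{-(\beta+F)|D(\gamma)|}$ to decay geometrically and the same denominator bound closes it with much room to spare. Finally, if $|\bar v_{\alpha h}|>2\epsilon$ then $B_\epsilon(\bar v_{\alpha h})\subseteq\{\tfrac1n|D(\gamma)|>\epsilon\}$, contradicting the concentration $\Pnf{\alpha h}\bigl(\tfrac1n\gamma(n)\notin B_\epsilon(\bar v_{\alpha h})\bigr)\le e^{-cn}$; hence $|\bar v_{\alpha h}|\le2\epsilon$ whenever $|\alpha-1|\le a_3\epsilon/h$, and letting $\epsilon\downarrow0$ gives $\lim_{\alpha\downarrow1}\bar v_{\alpha h}=0$. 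The step I expect to be the real obstacle is the sub-exponential lower bound on $\Znf{F}$: bringing the correction down to the optimal order $\beta^{2/3}n^{1/3}$ rather than $\beta n^{1/2}$ is exactly what makes the threshold linear in $\beta$ and of order $\epsilon^{-3}$.
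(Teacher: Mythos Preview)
Your proof is correct and follows essentially the same strategy as the paper's: bound the numerator using $R(\gamma)\ge D(\gamma)+1$ together with a Gaussian estimate on $\binom{n}{(n-D)/2}/2^n$, and bound the denominator from below by confining the walk to an interval of width $\asymp(n/\beta)^{1/3}$, so that both the range cost $\beta L$ and the confinement cost $n/L^2$ are $O(\beta^{2/3}n^{1/3})$. The only differences are cosmetic: the paper routes the numerator bound through the ratio $Z_{n,x}^{\lambda_0}/\Zxf{\lambda_0}$ (using a gambler's-ruin lower bound $\Zxf{\lambda_0}\ge e^{-\beta x}/(x+1)$) rather than your direct binomial estimate, and for the denominator it quotes the confinement probability $\PSRW(0\le X_k<M,\,\forall k\le n)\ge e^{-cn/M^2}$ rather than your spectral computation; your explicit identification $h=\xln(1)=\beta$ is an addition the paper leaves implicit.
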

\begin{proof}
Let us write $\Znx^{\lambda,F} = \sum_{\substack{\gamma:0\to x\\|\gamma|=n}} e^{-\Phi(\gamma) - \lambda |\gamma| + \inprod{F}{D(\gamma)}}$. We use the following convention: We drop $F$ and/or $\lambda$ from the notation when they take value $0$, and similarly drop $x$ or $n$ from the notation when the corresponding constraint is removed. For example, we write $\Zxf{\lambda} = \sum_{\gamma:0\to x} e^{-\Phi(\gamma) - \lambda|\gamma|}$. Also, given a partition function $Z$ and a family of paths $\calA$, we write $Z[\calA]$ for the corresponding partition function restricted to paths in $\calA$.

We can assume that $h>0$ (\ie, $h=\xi_{\lambda_0}(1)$).
We first bound from above the numerator of the probability:
\begin{multline*}
\Znf{\alpha h}\left[ D(\gamma) > \epsilon n \right]
=
\sum_{x>\epsilon n} e^{\lambda_0 n}\, \Znx^{\lambda_0,\alpha h}
=
\sum_{x>\epsilon n} e^{\lambda_0 n+\alpha hx}\, \Znx^{\lambda_0}\\
=
\sum_{x>\epsilon n} e^{\lambda_0 n+\alpha hx}\, \frac{\Znx^{\lambda_0}}{\Zxf{\lambda_0}}\, \Zxf{\lambda_0}
\leq
e^{\lambda_0 n}\sum_{x>\epsilon n} e^{(\alpha-1) hx}\, \frac{\Znx^{\lambda_0}}{\Zxf{\lambda_0}},
\end{multline*}
since $\alpha h x = \xi_{\lambda_0}(x) + (\alpha-1) h x$ and $\Zxf{\lambda_0} \leq e^{-\xi_{\lambda_0}(x)}$ (by sub-additivity).
Moreover,
$$
\Znx^{\lambda_0}
=
\ESRW\left[ e^{-\Phi(X_{[0,n]})}, X_n=x \right]
\leq
e^{-\beta x}\, \PSRW\left( X_n=x \right),
$$
while, using $\tau_x=\min\setof{n\geq 1}{X_n=x}$,
\begin{align*}
\Zxf{\lambda_0}
=
\sum_{n\geq 0} \ESRW\left[ e^{-\Phi(X_{[0,n]})}, X_n=x \right]
&\geq
e^{-\beta x}\,\sum_{n\geq 0} \PSRW\left( \tau_{-1} > \tau_x, \tau_x=n \right)\\
&\geq
e^{-\beta x}\, \PSRW\left( \tau_{-1} > \tau_x \right)
\geq
\frac {1}{x+1} e^{-\beta x}.
\end{align*}
We thence have, for some $c_1>0$,
\begin{multline*}
\Znf{\alpha h}\left[ D(\gamma) > \epsilon n \right] \leq e^{\lambda_0 n} (n+1)\sum_{x>\epsilon n} 
e^{(\alpha-1) hx} \PSRW\left( X_n=x \right)\\
\leq e^{\lambda_0 n}\, (n+1)\sum_{x>\epsilon n} e^{(\alpha-1) hx-c_1 x^2/n} \leq e^{\lambda_0 n}\, e^{-c_2 \epsilon^2 n},
\end{multline*}
provided that $(\alpha-1)h\ll\epsilon$ and $n\gg \epsilon^{-2}|\log n|$.

Let us now turn to the denominator. We have, with $M=(n/\beta)^{1/3}$,
\begin{multline*}
\Znf{\alpha h} \geq \Znf{\alpha h}\left[ 0\leq \gamma(k) < M,\; \forall k\leq n \right]\\
\geq
e^{-\beta M} e^{\lambda_0 n} \PSRW (0 \leq X_k < M, \forall 1\leq k\leq n )
\geq
e^{\lambda_0 n}\, e^{-c_3\beta^{2/3}n^{1/3}},
\end{multline*}
since $\PSRW (0 \leq X_k < M, \forall 1\leq k\leq n ) \geq \exp(-cn/M^2)$, for some $c>0$.
Finally,
$$
\Pnf{\alpha h}\left( D(\gamma) > \epsilon n \right) \leq e^{-(c_2\epsilon^2 -c_3\beta^{2/3} n^{-2/3})n} \leq e^{-c_4
\epsilon^2 n},
$$
provided $n$ is so large that $n^{-2/3}\beta^{2/3}\ll\epsilon^2$.
\end{proof}
Although the previous result confirms the corresponding prediction in~\cite{MeGr}, it seems that among one-dimensional self-attractive polymer models, this behavior is pathological. Indeed, as the following heuristic argument indicates, the generic situation in dimension $1$ should be that the transition is continuous at high temperatures only, and becomes of first order at low temperatures.

This can be easily understood (at a heuristic level at least) by considering what happens for the variant of the discrete sausage in which $\phi_2(\ell) = \beta \IF{\ell\geq 1} + \beta\IF{\ell\geq 2}$.

The discrete sausage is equivalent to a random walk among annealed killing traps in one dimension. What made the above proof work is that, for the walk to reach a distant point $x$, it must be the case that there are no obstacles between $0$ and $x$. But once this happens, the random walk can reach $x$ in a diffusive way at virtually no additional cost.

In the variant, however, the situation is completely different, at least when $\beta\gg 1$. Indeed, it is equivalent to a random walk among annealed i.i.d. traps coming in two colors: each vertex is independently occupied by a black trap with probability $p=1-e^{-\beta}$, by a white trap with probability $p$, by both with probability $p^2$, or is empty. The random walks dies if it steps on a black trap, or if it visits at least twice a white trap. By the same reasoning as before, we see that there must be no black traps in the interval between $0$ and $x$. Now there are two possible strategies: either we remove all traps altogether in this interval, or we force the walk to move ballistically (since it cannot visit more than once each remaining (white) trap, and has to cross them to reach $x$). A simple computation shows that, at large $\beta$, the second alternative is much less costly, and thus the critical pinned polymer should be ballistic. The situation is opposite at small $\beta$, and in this case the polymer should travel diffusively in an interval essentially free of traps.

This is reminiscent of what happens for the one-dimensional Brownian bridge among pointlike obstacles, for which it is proved~\cite{Po1998} that it is favorable for the Brownian 
motion to go ballistically to its final point when the obstacles are soft, but not in the case of hard obstacles.

\medskip
Let us turn now to our (still preliminary) results in dimensions $2$ and higher. In the one-dimensional case, we essentially reduced the analysis of the order of the transition to determining whether typical paths $\gamma:0\to x$ are ballistic at $\lambda=\lambda_0$ (for large $\|x\|$). Such a relationship still holds in higher dimensions, and is the core of our approach to this problem. Namely, the following argument shows that ballistic behavior of typical polymers $\gamma:0\to x$ at $\lambda=\lambda_0$ implies that the transition is of first order.

Let $h\in\Kln$, $\alpha>1$ and $\rho>0$. Using the fact that $\xi_{\lambda_0}(x) \geq \inprod{h}{x}$, for all $x\in\Rd$, and proceeding as above, we can write
\begin{align*}
\Znf{\alpha h}\left[ \|D(\gamma)\| \leq \rho n \right]
&=
\sum_{x:\,\|x\|\leq\rho n} e^{\lambda_0 n+\alpha\inprod{h}{x}}\, \Znx^{\lambda_0}
\leq
\sum_{x:\,\|x\|\leq\rho n} e^{\lambda_0 n+\alpha\inprod{h}{x}}\, \Zxf{\lambda_0}\\
&\leq
\sum_{x:\,\|x\|\leq\rho n} e^{\lambda_0 n+\alpha\xi_{\lambda_0}(x)}\, \Zxf{\lambda_0}
\leq
e^{\lambda_0 n+(\alpha-1) \kappa \rho n + o(n)},
\end{align*}
where $\kappa=\sup_{\|x\|=1} \xi_{\lambda_0}(x)$.

On the other hand, let $x_h$ be the unit vector dual to $h$. Assume that one can show the following ballisticity statement: There exists $\bar\rho>0$ such that, with $x=[\bar\rho n x_h]$,
\begin{equation}
\label{eq_BallisticCritical}
\bbP_{x}^{\lambda_0} (|\gamma|=n) \geq e^{-o(n)}, \qquad \text{as $n\to\infty$.}
\end{equation}
Then, using this $x$, we would have, since $\Zxf{\lambda_0}\asymp e^{-\xln(x)}$ and $\inprod{h}{x}=\xln(x)$,
\begin{multline*}
\Znf{\alpha h}
\geq
e^{\lambda_0 n} e^{\alpha\inprod{h}{x}}\, \frac{\Znx^{\lambda_0}}{\Zxf{\lambda_0}}\, \Zxf{\lambda_0}
\geq
e^{\lambda_0 n} e^{(\alpha-1) \xln (x) - o(n)}\,\bbP_{x}^{\lambda_0} (|\gamma|=n)\\
\geq
e^{\lambda_0 n} e^{(\alpha-1) \xln (x_h) \bar\rho n -o(n) },
\end{multline*}
which would allow us to conclude that the transition is of first order, since this would imply that, for small enough $\rho$,
$$
\Pnf{\alpha h}\left( \| D(\gamma) \| \leq \rho n \right) \leq e^{-c(\alpha) n},
$$
for any $\alpha>1$ and $n$ large enough.

\medskip
The higher dimensional problem is thus reduced to proving~\eqref{eq_BallisticCritical}. One natural way to proceed is to try to extend the Ornstein-Zernike analysis developed in~\cite{IV08} for the case $\lambda>\lambda_0$ to the case $\lambda=\lambda_0$. This requires a slightly different coarse-graining, and a substantially refined argument and is still under progress. One part of the argument is already complete and explains a fundamental difference between dimensions $1$ and higher.  It follows from the coarse-graining argument alluded to above that there always exists a ballistic random tube of (in average) bounded cross-section connecting $0$ and $x$ inside which the polymer has to remain. In dimensions $2$ and higher, this forces the polymer to move ballistically, since staying inside this tube longer than needed has a high entropic cost. In dimension $1$, however, as there is no transverse direction in which to escape the tube, this entropic cost disappears and the polymer can behave diffusively (although it does not necessarily do so, as discussed above).

\section{Semi-directed polymer in a quenched random environment}
\label{sec_QuenchedDisorder}
As already mentioned, the case of a polymer in an annealed random potential falls into the general framework considered in the present paper. In this section, following a  work in progress, we present some partial new results concerning the case of a stretched polymer in a quenched weak disorder. The full account will be published elsewhere.

The strength of the disorder will be modulated by an additional parameter $\beta\geq 0$. Accordingly, we adjust the quenched and the annealed weights in \eqref{qweight} and \eqref{aweight} as
\[
 \wq (\gamma ) = {\rm e}^{-\beta\sum_{i=0}^n V_{\gamma (i )}(\theta  ) 
- \lambda |\gamma | }\quad\text{and}
\quad \wa (\gamma ) = \bfE \wq (\gamma ) .
\]
Given $N\in\bbN$ define
\begin{equation*}
 \calH_N^-\, 
=\, \setof{ x = (x_1 ,\dots ,x_{d})\in\bbZ^{d}}{ x_{1} < N}
\end{equation*}
and its outer vertex boundary $\calL_N =\partial\calH_N^-$. Consider the family $\calD_N$ of nearest neighbor paths from the origin $0$ to $\calL_N$, and define the corresponding quenched and annealed partition functions,
\[
Z_N^\theta  = Z_N^\theta  (\lambda ,\beta ) = \sum_{\gamma\in \calD_N} \wq (\gamma )
\quad\text{and}\quad 
\bfZ_N  =\bfE Z_N^\theta  (\lambda ,\beta ).
\]
We shall  assume that the distribution of $V_x$-s has bounded support 
and that it satisfies $0\in {\rm supp} (V )\subseteq [0,\infty )$. The former
condition can be relaxed (see, \eg, \cite{Flury}), whereas the latter 
is just a normalization condition.

It has been recently proved by Flury \cite{Flury} and then 
reproved by Zygouras \cite{Zygouras}  that in four  and higher dimensions for any $\lambda  >\lambda_0 =\log (2d )$ the annealed and the quenched free energies are equal once $\beta$ is small enough. Namely, for all $\beta$ sufficiently small there exists $\xi =\xi (\lambda  ,\beta ) >0$ such that
\begin{equation}
 \label{Flury}
-\lim_{N\to\infty}\frac{1}{N}\log Z_N^\theta  \, =\,\xi\, =\, 
-\lim_{N\to\infty}\frac{1}{N}\log \bfZ_N.
\end{equation}
This is an important result: In sharp contrast with models of directed polymers, the model of semi-directed polymers does not have an immediate underlying martingale structure and, subsequently one has to look for different (and arguably more intrinsic) ways to study it.

We shall take \eqref{Flury} as the starting point. Since, as explained in preceding sections, we fully control the annealed weights, there are some consequences at hand. First of all,  it immediately follows from Chebyshev inequality that, given $\nu >0$, we may eventually ignore {\em in the quenched ensemble} sub-families $\calA_N\subset\calD_N$ which satisfy ${\mathbf Z}_N (\calA_N )\leq  {\rm e}^{-\nu N} {\mathbf Z}_N$.  In particular, we can continue
to restrict attention 
to paths $\gamma\in\calD_N$ which admit the irreducible decomposition \eqref{decomposition} with only bulk irreducible paths present.  Next, the constant $\xi$ in \eqref{Flury} can be identified in terms of inverse correlation length as $\xi = \xi_\lambda (\sfe_1 )$, where $\sfe_1$ is the unit vector in the first coordinate direction.  More precisely, as in \eqref{CPF2}, we can express the annealed partition function as
\begin{equation}
\label{ZanN}
\bfZ_N  = \sum_{x\in\calL_N} G_\lambda (x)  = {\rm e}^{-N\xi }
\sum_M
\otimes\Prob_\lambda^{\xi\sfe_1}  \Bigl( \sum_{i=1}^M \inprod{D (\omega_i )}{\sfe_1} = N\Bigr) . 
\end{equation}
Here is our refinement of the result by Flury and Zygouras,
\begin{theorem}
\label{Thm:limit}
Let $d\geq 4$.  Then for every $\lambda >\lambda_0$ there exists $\beta_0 =\beta_0 (\lambda ,d)$, such that for every $\beta \in [0 ,\beta_0 )$ the limit; 
\begin{equation}
 \label{eq:limit}
\Xi^\theta =  \lim_{N\to\infty}
\Xi_N^\theta \df 
\lim_{N\to\infty} \frac{Z^\theta _N}{{\mathbf Z}_N}  \, \in\, (0,\infty ), 
\end{equation}
exists $\bfP$-a.s. and in $\bbL_2 (\Omega  )$.
\end{theorem}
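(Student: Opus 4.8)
The plan is to establish $L^2$-convergence of the martingale-like sequence $\Xi_N^\theta = Z_N^\theta/\mathbf{Z}_N$, and deduce almost sure convergence and positivity of the limit from uniform integrability. The first and decisive step is to identify the right filtration. Because the polymer is \emph{semi}-directed (monotone in the first coordinate but free in the transverse directions), there is no canonical martingale as in the directed case; however, after restricting — by the Chebyshev argument explained in the text, using \eqref{Flury} — to paths admitting the irreducible decomposition \eqref{decomposition} with only bulk pieces, one can organize the path by the sequence of cone-points, which are now strictly increasing in the first coordinate. I would let $\mathcal{G}_N$ be the $\sigma$-algebra generated by the environment $(V_x)$ in the slab $\{x_1 < N\}$ (or a slightly fattened version thereof accounting for the transverse spread of the irreducible pieces, which by \eqref{ExpTails} has exponential tails). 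Relative to $\mathcal{G}_N$, $\Xi_N^\theta$ is \emph{not} exactly a martingale, but it is an approximate one up to boundary corrections coming from the last irreducible piece straddling the hyperplane $\mathcal{L}_N$; these corrections are controlled by \eqref{ExpTails}.

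Second, I would prove an $L^2$ bound, $\sup_N \mathbf{E}[(\Xi_N^\theta)^2] < \infty$, for $\beta$ small. Expanding the square gives a sum over \emph{pairs} of paths $(\gamma,\gamma')$ from $0$ to $\mathcal{L}_N$, weighted by $\mathbf{E}[\wq(\gamma)\wq(\gamma')]/\mathbf{Z}_N^2$. Using the bounded support of $V$ and independence across sites, the environment expectation of the product factors as the product of annealed weights times a multiplicative correction $\prod_{x} \exp(\text{something})$ supported on the common range $\gamma \cap \gamma'$, and the correction is bounded by $e^{C(\beta)\, |\gamma\cap\gamma'|}$ with $C(\beta)\to 0$ as $\beta\to 0$. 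Passing to the grand-canonical representation \eqref{ZanN}, the second moment becomes a sum over pairs of independent strings of irreducible pieces, penalized by the expected overlap of their ranges. The key estimate is that, for two independent stretched polymers in the slab, the expected number of common sites is $O(1)$ in $d\ge 4$ — this is precisely where the dimensional restriction enters, being the analogue of the finiteness of the Green's function intersection / bubble diagram $\sum_x G_\lambda(x)^2 < \infty$ that already appears in the discussion of $\lambda_0$ above, now refined to the pair-of-polymers setting. With $C(\beta)$ small enough relative to this $O(1)$ overlap, a geometric-series / subadditivity argument caps the second moment uniformly in $N$.

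Third, with the $L^2$ bound in hand, I would upgrade the approximate-martingale property to genuine $L^2$-convergence: the boundary corrections vanish in $L^2$ as $N\to\infty$ by \eqref{ExpTails} plus the uniform second-moment control, so $(\Xi_N^\theta)$ is Cauchy in $L^2(\Omega)$; call the limit $\Xi^\theta$. Uniform integrability (from the $L^2$ bound) then yields $\mathbf{E}\Xi^\theta = \lim \mathbf{E}\Xi_N^\theta = 1$, and a standard martingale-difference / backward argument along the slab filtration gives almost-sure convergence as well. Finally, positivity $\Xi^\theta > 0$ a.s. is the usual $0$–$1$ step: the event $\{\Xi^\theta = 0\}$ is measurable with respect to the tail $\sigma$-algebra $\bigcap_N \mathcal{G}_N^{\,c}$ (environment far out in the first coordinate), hence has probability $0$ or $1$ by Kolmogorov, and it cannot be $1$ since $\mathbf{E}\Xi^\theta = 1$.

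The main obstacle I expect is the overlap estimate in the second step — showing that two independent copies of the stretched annealed polymer in the slab meet at only $O(1)$ sites in expectation, uniformly in $N$, with a constant that can be absorbed by taking $\beta$ small. This is the heart of the weak-disorder argument and is exactly what forces $d\ge 4$; getting it cleanly requires combining the exponential tails \eqref{ExpTails} of the irreducible pieces with a renewal-type decomposition of the pair of strings and an Ornstein–Zernike-style local-limit control of $G_\lambda$, so that the pair process is effectively a random walk on $\mathbf{Z}^d$ whose "intersection local time" is summable precisely when $d\ge 4$. Everything else — the filtration setup, the boundary corrections, the UI and $0$–$1$ arguments — is routine once that estimate is secured.
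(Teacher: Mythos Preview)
The paper does not actually prove Theorem~\ref{Thm:limit}; it is announced there as part of a work in progress, with the full account deferred elsewhere. The only content the paper offers beyond the statement is the explicit Ansatz~\eqref{Ansatz},
\[
\Xi^\theta \;=\; 1 + \sum_x \frt^\theta_\lambda(x)\bigl(\frq^{\upsilon_x\theta}_\lambda - 1\bigr),
\]
built from the quenched renewal quantities $\frt^\theta_\lambda$, $\frq^\theta_\lambda$ associated with the irreducible decomposition. This indicates that the intended argument does \emph{not} try to set up a martingale along a slab filtration; rather, one writes down a concrete candidate for $\Xi^\theta$ via the renewal representation \eqref{arenewal}--\eqref{Ansatz} and proves directly that this series converges in $\bbL_2$ and almost surely.

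Your proposal is sound in spirit --- the second-moment/overlap computation you isolate is indeed the crux, and it is precisely what forces $d\geq 4$ --- but your organizing device differs from the one the paper points to. You attempt to manufacture an ``approximate martingale'' along $\calG_N$ and then repair the boundary defects; the Ansatz route instead hands you the limiting object from the start and reduces everything to tail control of an explicit series over cone-points, which is cleaner for a non-directed model where the slab-martingale structure is genuinely absent. One place where your sketch needs more care is the $0$--$1$ argument for positivity: since the walk is not directed, a path can in principle accumulate arbitrarily large local time in a fixed finite box, so the usual claim that altering finitely many $V_x$ changes $Z_N^\theta$ only by a bounded multiplicative factor is not automatic and has to be routed through the cone-confinement and the exponential tails~\eqref{ExpTails}.
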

Our second result gives a kind of justification to the prediction
 that semi-directed polymers should be diffusive at weak 
disorder: The random weights $\wq$ give rise to a (random) probability distribution $\mu_N^\theta $ on $\calD_N$. For a polymer $\gamma = (\gamma (0), \dots ,\gamma (\tau ))\in \calD_N$; $\tau = |\gamma |$, define $X (\gamma )$ as the $\bbZ^{d-1}$-valued transverse coordinate of its end-point; $\gamma (\tau ) = (N , X (\gamma ))$.
\begin{theorem}
\label{Thm:diffusive}
Let $d\geq 4$.   Then for every $\lambda >\lambda_0$ there exists $\hat \beta_0 =\hat \beta_0 (\lambda  ,d)$, such that for every $\beta \in [0, \hat\beta_0)$ the distribution of $X $ obeys the diffusive scaling with a non-random diffusivity constant $\sigma = \sigma (\beta ,\lambda ) >0$ in the following sense:
\begin{equation}
 \label{eq:diffusive}
\lim_{N\to\infty}\Xi_N^\theta \mu_N^\theta  \lb\frac{| X (\gamma )|^2 - \sigma^2 N}{N}\rb =0 
\quad \text{in $\bbL_2 (\Omega )$.}
\quad
\end{equation}
\end{theorem}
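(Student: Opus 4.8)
The plan is to compute the $\bbL_2(\Omega)$-norm by a replica trick and peel off the annealed mean. Set $f_N(\gamma)\df\tfrac1N|X(\gamma)|^2-\sigma^2$, where $\sigma^2\df\lim_{N}\tfrac1N\mu_N^{\scriptscriptstyle\rm an}(|X|^2)$ and $\mu_N^{\scriptscriptstyle\rm an}(\gamma)\df\wa(\gamma)/\bfZ_N$ is the annealed slab measure on $\calD_N$ (with $\bfZ_N=\sum_{\gamma\in\calD_N}\wa(\gamma)$); this limit exists and equals the trace of the asymptotic covariance of $X(\gamma)/\sqrt N$ under $\mu_N^{\scriptscriptstyle\rm an}$ by the Ornstein--Zernike local limit theory of~\cite{IV08}, so that $\mu_N^{\scriptscriptstyle\rm an}(f_N)\to0$. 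Since $\Xi_N^\theta\mu_N^\theta(f_N)=\bfZ_N^{-1}\sum_{\gamma\in\calD_N}f_N(\gamma)\wq(\gamma)$, squaring and using that the $V_x$ are i.i.d. gives
\[
\bfE\bigl[(\Xi_N^\theta\mu_N^\theta(f_N))^2\bigr]=\mathbb{E}_{\scriptscriptstyle\rm an}^{\otimes2}\bigl[f_N(\gamma)f_N(\gamma')(1+R_N(\gamma,\gamma'))\bigr],
\]
where $\mathbb{E}_{\scriptscriptstyle\rm an}^{\otimes2}$ is expectation under two independent copies of $\mu_N^{\scriptscriptstyle\rm an}$ and $R_N(\gamma,\gamma')\df\prod_xe^{\psi_\beta(\ell_x(\gamma),\ell_x(\gamma'))}-1\ge0$ is the replica weight, with $\psi_\beta(a,b)\df\phi_\beta(a)+\phi_\beta(b)-\phi_\beta(a+b)$ and $\phi_\beta(\ell)\df-\log\bfE e^{-\beta\ell V}$; note $\psi_\beta(a,b)=O(\beta^2ab)$ and $\psi_\beta(a,b)=0$ unless $a,b\ge1$, so $R_N\equiv0$ on disjoint pairs. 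The term coming from the $1$ equals $\mu_N^{\scriptscriptstyle\rm an}(f_N)^2\to0$, so it remains to prove $\mathbb{E}_{\scriptscriptstyle\rm an}^{\otimes2}[f_N(\gamma)f_N(\gamma')R_N(\gamma,\gamma')]\to0$.

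\paragraph{Step 2: regeneration structure of the pair.} Next I would set up a joint renewal structure. Under the grand-canonical tilting $\otimes\Prob_\lambda^{\xi\sfe_1}$ each of $\gamma,\gamma'$ decomposes into i.i.d. irreducible pieces with exponential tails (Section~4), so at the coarse-grained scale both are ballistic renewals with light-tailed $\bbZ^{d-1}$-valued transverse increments. For $d\ge4$ the transverse dimension is $\ge3$, the difference renewal is transient, and two independent annealed polymers intersect only finitely often; thus there is an a.s.-finite last interaction level $L=L(\gamma,\gamma')$, and conditionally on the early joint configuration $E$ below level $L$ — which is $R_N$-measurable — the two tails evolve as independent annealed polymers run from their positions at level $L$ until the hyperplane $\calL_N$. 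Combined with the uniform bound $\sup_N\mathbb{E}_{\scriptscriptstyle\rm an}^{\otimes2}[R_N]<\infty$ coming from Theorem~\ref{Thm:limit} (and the usual small-$\beta$ refinements), this yields a genuine regeneration structure for the pair with an interaction region of uniformly-in-$N$ integrable longitudinal extent.

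\paragraph{Step 3: conditional symmetry and conclusion.} I would then use symmetry twice. Conditioning on $E$: the annealed model is translation-invariant and hypercubically symmetric, and first hitting of $\calL_N$ is invariant under transverse reflections, so each tail has conditionally centered transverse displacement and, by the renewal theorem, conditional mean-square $\sigma^2(N-L)+O(1)$ uniformly. Hence $\mathbb{E}_{\scriptscriptstyle\rm an}^{\otimes2}[f_N(\gamma)\mid E]=\tfrac1N(|y_L(\gamma)|^2-\sigma^2L+O(1))$, with $y_L$ the transverse position at level $L$, and by the conditional independence of the tails
\[
\mathbb{E}_{\scriptscriptstyle\rm an}^{\otimes2}[f_N(\gamma)f_N(\gamma')R_N]=\tfrac1{N^2}\mathbb{E}_{\scriptscriptstyle\rm an}^{\otimes2}\Bigl[R_N(|y_L(\gamma)|^2-\sigma^2L+O(1))(|y_L(\gamma')|^2-\sigma^2L+O(1))\Bigr].
\]
Since $R_N$ depends on $E$ only through the transverse-translation-invariant, light-tailed interaction skeleton, a further conditional averaging over the centered free wiggles of the excursions making up $E$ replaces each factor $|y_L(\cdot)|^2-\sigma^2L$ by an error of the order of the longitudinal extent of the interaction region; that extent has moments bounded uniformly in $N$ and integrable against $R_N$, so the right-hand side is $O(1/N^2)\to0$, the mixed and $O(1)$ terms being smaller still. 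Together with $\Xi^\theta\in(0,\infty)$ from Theorem~\ref{Thm:limit}, this also gives $\mu_N^\theta(f_N)\to0$ in probability.

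\paragraph{Main obstacle.} The crux is the last averaging in Step~3. The interaction skeleton forces $\gamma$ and $\gamma'$ to coincide at every shared vertex, so $R_N$ and the transverse end-points are genuinely correlated, and one must show that under the $R_N$-biased measure $\mathbb{E}_{\scriptscriptstyle\rm an}^{\otimes2}$ the transverse end-point of each polymer is still a centered sum of $O(N)$ light-tailed increments up to an interaction correction that is negligible uniformly in $N$; equivalently, that the pair-regeneration of the replica-biased process is as robust as the single-polymer OZ structure of~\cite{IV08}. This is exactly where both hypotheses are essential: transience ($d\ge4$) makes the interaction region a.s. finite, and weak disorder ($\beta$ small) keeps the replica weight in $\bbL_2$ so that the region stays integrable after the bias. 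Making this quantitative — a refined coarse-graining of the \emph{pair} at the correlation scale, uniform in $N$ — is the part of the work in progress referred to above.
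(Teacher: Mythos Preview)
The paper does not actually contain a proof of Theorem~\ref{Thm:diffusive}. Section~\ref{sec_QuenchedDisorder} explicitly presents this result as coming from ``a work in progress'' whose ``full account will be published elsewhere''; the only technical comment given there concerns Theorem~\ref{Thm:limit} (the Ansatz~\eqref{Ansatz}), and nothing is said about how~\eqref{eq:diffusive} is established. So there is no proof in the paper to compare your proposal against.

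That said, your outline is a reasonable sketch of the second-moment/replica route one would expect: the identity in Step~1 is correct, and the reduction to controlling $\mathbb{E}_{\scriptscriptstyle\rm an}^{\otimes2}[f_N(\gamma)f_N(\gamma')R_N]$ is the natural first move; invoking transience of the transverse walk in dimension $d-1\ge3$ together with the $\bbL_2$-bound from Theorem~\ref{Thm:limit} to control the size of the interaction region is also the right mechanism. However, Step~3 as written has a genuine gap beyond the one you flag. Your ``last interaction level'' $L$ is not a stopping time for either polymer: the event $\{L=k\}$ requires the two tails from level $k$ onward to be disjoint, so conditioning on $E$ does \emph{not} leave the tails distributed as independent annealed polymers --- they are conditioned to avoid each other. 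In $d\ge4$ this conditioning is soft (the avoidance event has probability bounded away from $0$ uniformly), but it does perturb the conditional law, and in particular the clean identity $\mathbb{E}_{\scriptscriptstyle\rm an}^{\otimes2}[\,|X(\gamma)|^2\mid E]=|y_L(\gamma)|^2+\sigma^2(N-L)+O(1)$ needs justification. The subsequent ``averaging over free wiggles'' is also delicate: at every shared vertex the two paths are pinned together, so the transverse coordinates of $\gamma$ and $\gamma'$ at level $L$ are genuinely correlated through the whole interaction skeleton, and replacing $|y_L|^2-\sigma^2L$ by something of order the number of intersections (rather than of order $L$) requires a real argument. You correctly identify this as the crux, but as written the proposal is an outline of a strategy rather than a proof.
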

Above $\sigma (\beta ,\lambda )$   is precisely the (transversal) diffusivity constant of the corresponding annealed polymer model.  That is $\sigma$ is the Gaussian curvature of $\partial \Kl$ at $\xi(\sfe_1)$.

\smallskip 
We conclude this section with a brief comment regarding \eqref{eq:limit}.  One way to read \eqref{ZanN} is to claim that the quantity $\bft_\lambda (x)\df {\rm e}^{\xi x_1}G_\lambda (x)$ satisfies a $d$-dimensional renewal relation, 
\begin{equation}
\label{arenewal} 
\bft_\lambda (x) = \sum_y \bft_\lambda (y )\bfq_\lambda (x-y ) ,
\end{equation}
where 
\[
\bfq_\lambda ( z) = \bfE \frq_\lambda^\theta (z) \df \bfE\Bigl\{
{\rm e}^{\xi z_1} \sumtwo{\omega\in\Omega}{\calD (\omega )=z} \wq (\omega ) \Bigr\} .  
\]
The quenched version of \eqref{arenewal} is, 
\[
 \frt_\lambda^\theta  (x) = \sum_y \frt_\lambda^\theta  (y )\frq_\lambda^{\upsilon_y\theta} (x-y ), 
\]
where $\upsilon_y\theta$ is the corresponding shift of random environment.  Set $\frq^\theta_\lambda  = \sum_y \frq_\lambda^\theta (y)$.  Then the limit $\Xi^\theta$ in \eqref{eq:limit} is actually recovered through the following Ansatz, 
\begin{equation}
 \label{Ansatz}
\Xi^\theta = 1 +\sum_x   \frt^\theta_\lambda (x) \lb \frq^{\upsilon_x\theta}_\lambda -1\rb .
\end{equation}

\appendix
\section{Divergence of the correlation length for self-repulsive polymers}
\label{sec-newProof}
In this section, we provide a proof that the correlation length of self-repulsive polymers diverges as $F\downarrow 0$. This corrects the incomplete argument given in~\cite{IV08}\footnote{We are grateful to Jean Bérard for pointing out this flaw}.
By convexity, it is enough to consider $\xi_\lambda \df\xi_\lambda  (\sfe_1 )$.
\begin{proposition}
In the case of repulsive potentials,
$$
\xln = \lim_{\lambda\downarrow\lambda_0}\xi_\lambda =0 .
$$
\end{proposition}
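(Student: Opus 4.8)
The plan is to obtain a lower bound on the two–point function $G_\lambda(n\sfe_1)$ for $\lambda$ slightly above $\lambda_0$ and then optimise. By convexity and the lattice symmetries one has $\xi_\lambda(x)\le \xi_\lambda(\sfe_1)\,\|x\|_1$, so it is enough to show $\xi_{\lambda_0}=\lim_{\lambda\downarrow\lambda_0}\xi_\lambda(\sfe_1)=0$; moreover $\xi_\lambda(\sfe_1)$ is non‑decreasing in $\lambda$, so this limit equals $\inf_{\lambda>\lambda_0}\xi_\lambda(\sfe_1)\ge 0$ and the task is to show the infimum is $0$. Write $Z_{N,x}=\sum_{\gamma:0\to x,\ |\gamma|=N}e^{-\Phi(\gamma)}$, so that $G_\lambda(n\sfe_1)=\sum_{N}e^{-\lambda N}Z_{N,n\sfe_1}$. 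To bound $Z_{N,n\sfe_1}$ from below I would keep only a special family of paths: fix an integer $s$, split $[0,n\sfe_1]$ into $k=(n+1)/s$ consecutive, pairwise disjoint slabs of width $s$ in the $\sfe_1$–direction (unbounded in the transverse directions), and restrict to paths that traverse the $j$‑th slab by a sub‑path $\gamma_j$ confined to that slab and joining its two bounding faces, with one connecting step between consecutive slabs. The decisive structural point — and what makes the repulsive case genuinely different from the attractive one — is that, the slabs being disjoint, the self‑interaction is exactly additive along such a concatenation, $\Phi(\gamma_1\amalg\cdots\amalg\gamma_k)=\sum_j\Phi(\gamma_j)$, and this remains true even when $\phi$ takes the value $+\infty$ (as for the SAW). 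Summing over the $\gamma_j$'s and using translation invariance then gives $Z_{N,n\sfe_1}\ge B_s(L)^k$ with $N=kL+(k-1)$, where $B_s(L)$ denotes the partition function of length‑$L$ paths crossing a width‑$s$ slab between two prescribed opposite‑face vertices.

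Plugging this in, $G_\lambda(n\sfe_1)\ge e^{-\lambda(kL+k)}B_s(L)^k$; letting $n\to\infty$ (so $k/n\to 1/s$) and then $\lambda\downarrow\lambda_0$ (the bound being increasing in $\lambda$) yields, for every admissible pair $(s,L)$,
\[
\xi_{\lambda_0}(\sfe_1)\ \le\ \frac{\lambda_0 L-\log B_s(L)}{s}\ +\ \frac{\lambda_0}{s}.
\]
Since the second term vanishes as $s\to\infty$, everything reduces to producing, for each $s$, a length $L=L(s)$ with $\lambda_0 L-\log B_s(L)=o(s)$. I would split the deficit as $\lambda_0 L-\log B_s(L)\le(\lambda_0-\lambda_0^{(s)})L+(\text{endpoint cost})$, where $\lambda_0^{(s)}:=\lim_L\tfrac1L\log \tilde Y_s(L)$ is the free energy of the free polymer confined to a width‑$s$ slab ($\tilde Y_s$ being the partition function with unconstrained endpoint), which is well defined by sub‑additivity and satisfies $\tilde Y_s(L)\ge e^{\lambda_0^{(s)}L}$ with $\lambda_0^{(s)}$ non‑decreasing in $s$ and bounded by $\lambda_0$; and where, once $L$ is of order $s^2$ so that the confined polymer has had time to cross the slab and the transverse endpoint distribution has spread, pinning the endpoint at a prescribed vertex of the far face costs only a polylogarithmic factor, i.e. the endpoint cost is $O(\log(sL))$. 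Choosing $L\asymp s^2$ then gives $\lambda_0 L-\log B_s(L)\le(\lambda_0-\lambda_0^{(s)})\,s^2+O(\log s)$, which is $o(s)$ as soon as the confinement cost obeys $\lambda_0-\lambda_0^{(s)}=O(1/s^2)$ (even $o(1/s)$ would do). Feeding this back and letting $s\to\infty$ gives $\xi_{\lambda_0}(\sfe_1)=0$, hence the proposition.

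The step I expect to be the real obstacle is the rate estimate $\lambda_0-\lambda_0^{(s)}=O(1/s^2)$, i.e. the assertion that compressing a free critical polymer into a slab of width $s$ costs only $O(1/s^2)$ in free energy per monomer, together with the companion statement that such a confined polymer reaches the far face of its slab at sub‑exponential cost. For $\phi\equiv 0$ both are read off the transfer‑matrix spectral gap of a width‑$s$ path graph, but in general they must be established without any control on the transverse displacement of the chain; in particular one may not invoke sub‑ballisticity of the critical polymer, which is open (as recalled above). I would prove them by a direct combinatorial construction of slab‑confined polymers carrying almost the full free energy — building them out of short free sub‑polymers laid out along the transverse directions, with their $\sfe_1$–excursions corrected by hand — in the spirit of the Hammersley–Welsh/Kesten bridge arguments for self‑avoiding walks; making this construction robust across all repulsive potentials, hard‑core ones included, is the delicate part, whereas everything else in the argument is soft.
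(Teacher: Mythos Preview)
Your approach is genuinely different from the paper's, but the step you yourself flag as the obstacle is a real gap, not a technicality. The two inputs you need --- the slab free-energy estimate $\lambda_0-\lambda_0^{(s)}=o(1/s)$ and the polylogarithmic endpoint-pinning cost --- are not established, and a Hammersley--Welsh/Kesten construction does not deliver them. That machinery unfolds an arbitrary walk into a concatenation of \emph{bridges} (half-space walks), giving a sub-exponential comparison between bridge and walk counts; it does not manufacture walks confined to a slab of \emph{prescribed finite width} $s$ carrying nearly the full free energy. Even for the SAW, while $\lambda_0^{(s)}\uparrow\lambda_0$ is known, the rate $o(1/s)$ is not a consequence of the classical bridge arguments. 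More to the point, an estimate of this type is morally an assertion that the critical polymer does not spread ballistically in the $\sfe_1$-coordinate --- precisely the kind of sub-ballisticity information the paper explicitly records as open (already for SAW in dimensions $2,3,4$). So what you are deferring is at least as hard as the proposition itself.

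The paper sidesteps this entirely by arguing by contradiction, exploiting a structural feature of the repulsive case that your argument never touches: super-additivity of $\Phi$ gives $Z_n\ge e^{\lambda_0 n}$ for every $n$, whence $\sum_x G_{\lambda_0}(x)=\sum_n Z_n e^{-\lambda_0 n}=\infty$. Assuming $\xi_{\lambda_0}>0$, one first compares with simple random walk to deduce that $\phi(\ell)/\ell$ has limit strictly larger than $\log(2d)-\lambda_0$, which yields a uniform bound on the total $W_{\lambda_0}$-weight of paths confined to any finite box $G_K$. Hammersley--Welsh then enters, but in the opposite direction from yours: it is used to bound from \emph{above} the weight of half-space (and hence box-exiting) paths, showing that the total weight of paths reaching $\partial G_K$ decays like $e^{-\nu K}$. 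Feeding these two bounds into the authors' coarse-graining forces $\sum_x G_{\lambda_0}(x)<\infty$, contradicting the divergence above. No quantitative information about the geometry of the critical polymer is required.
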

\begin{proof}
The potential being repulsive, we know that $Z_n\geq e^{\lambda_0 n}$, for all $n\geq 1$, and thus can deduce that
\begin{equation}
\label{eq-divergenceatcriticality}
\sum_{x\in\Zd}G_{\lambda_0}(x) = \sum_{n\geq 1} Z_n e^{-\lambda_0 n} = \infty.
\end{equation}
We are going to show that a non-vanishing $\xln$ would contradict the latter statement.
Let us define $\alpha \df \lim_{\ell\to\infty}\frac{\phi (\ell )}{\ell}$.
\begin{lemma}
Assume that
\begin{equation}
\label{Assumption} 
\xi_{\lambda_0} = \lim_{\lambda\downarrow\lambda_0}\xi_\lambda >0 .
\end{equation}
Then, 
$
\alpha > \log 2d - \lambda_0 .
$
 In particular, there exist $c_1>0 $ and $ c_2<\infty$, such that
\begin{equation}
 \label{eq:phibound}
\lambda_0\ell  + \phi (\ell ) \geq \ell (c_1 +\log 2d) - c_2, 
\end{equation}
for all $\ell\in\bbN$.
\end{lemma}
\begin{proof}
Indeed, assume that $\alpha\leq  \log 2d - \lambda_0$. Then
$\phi (\ell )\leq \ell (\log 2d - \lambda_0)$. Consequently, for $\lambda = \lambda_0 +\epsilon$, 
\[
 (\lambda_0 +\epsilon )|\gamma | +\sum_x \phi (\ell_x (\gamma ))  \leq (\log 2d +\epsilon )
|\gamma | .
\]
As a result, $\xi_\lambda\leq \xi_\epsilon^{SRW}$, where the latter is the Lyapunov 
exponent of the simple random walk with killing rate $\epsilon$, known to tend to zero as 
$\epsilon$ tends to zero. 
\end{proof}
Let $G_K = [-K, \dots ,K]^d$ be the cube of radius $K$.
\begin{lemma}
Assume \eqref{Assumption}. Then for every $K$ there exists $\nu_1 = \nu_1 (K)$ such that
\begin{equation}
 \label{eq:Kd}
\sum_{\substack{\gamma(0)=0\\\gamma\subseteq G_K}}
W_{\lambda_0}  (\gamma )
\leq
\nu_1 (K).
\end{equation}
\end{lemma}
\begin{proof}
By \eqref{eq:phibound}, 
\[
W_{\lambda_0}  (\gamma ) \leq {\rm e}^{c_2 (2K+1 )^d - c_1|\gamma |}\lb\frac1{2d}
\rb^{|\gamma |} , 
\]
for every $\gamma$ such that $\gamma(0)=0$ and $\gamma\subseteq G_K$.
\end{proof}

Let $\calG_K$ be the set of paths $\gamma = (\gamma (0 ), \dots , \gamma (n))$ such that
$\gamma\setminus\gamma (n)\in G_K$ and $\gamma (n)\in\partial_{\rm out}G_K$.
\begin{lemma}
Assume \eqref{Assumption}. Then there exist $\nu_2 <\infty$ and $\nu_3 >0$ such that
\begin{equation}
\label{eq:GKSet}
\sum_{\gamma\in\calG_K} W_{\lambda_0}  (\gamma ) \leq \nu_2 {\rm e}^{-\nu_3 K} .
\end{equation}
\end{lemma}
\begin{proof}
The proof goes along the lines of the Hammersley-Welsh method as exposed in the 
book~\cite{SAWbook}. 
Let us say that $\gamma = (\gamma (0 ), \dots , \gamma (n))$ is a half-space path if
 $\gamma (0) = 0$ and 
\[
 \langle \gamma (\ell ) , \vec{e}_1\rangle_d < \langle \gamma ( n ) , \vec{e}_1\rangle_d , 
\]
for all $\ell < n = |\gamma |$.  The set of half-space paths with 
$\lb \gamma ( n ) , \vec{e}_1\rb_d = K$ will be denoted as $\calP_K$. Evidently, 
\eqref{eq:GKSet} will follow from
\begin{equation}
\label{eq:PK} 
\sum_{\gamma\in\calP_K} W_{\lambda_0}  (\gamma ) \leq \frac{\nu_2}{2d} 
{\rm e}^{-\nu_3 K} .
\end{equation}
In order to prove \eqref{eq:PK}, let us define yet another set of paths: We shall
say that $\gamma$ is a cylindrical path; $\gamma\in\calC_K$,  if 
$\gamma\in\calP_K$ and, in addition,
\[
 \langle \gamma ( \ell) , \vec{e}_1\rangle_d \geq 0 ,  
\]
for all $\ell = 0, \dots\ |\gamma |$.  The function
\[
 \bbC_\lambda (K) \df\sum_{\gamma\in\calC_K} W_\lambda (\gamma ) 
\]
is finite for all $\lambda >\lambda _0$. By our assumption on the potential $\Phi$, 
\[
  \bbC_\lambda (K + L)\geq {\rm e}^{- \phi (1) }
\bbC_\lambda (K)\bbC_\lambda (L) .
\]
Therefore, $K\mapsto {\rm e}^{- \phi (1) }\bbC_\lambda (K)$ is super-multiplicative.
By convexity of the inverse correlation length,
\[
 \lim_{K\to\infty}\frac1{K}\log \bbC_\lambda (K) \leq - \xi_\lambda .
\]
It follows that, 
\begin{equation}
 \label{eq:Cnot}
\bbC_\lambda (K ) \leq  {\rm e}^{\phi (1) - K\xi_\lambda (\vec{e}_1 )} ,
\end{equation}
uniformly in $K$ and in $\lambda >\lambda_0$. By 
monotone convergence, \eqref{eq:Cnot} holds at
$\lambda_0$ as well. 

Next, each path $\gamma\in\calP_K$ can be canonically 
(that is, by a series of reflections) mapped into a concatenation
of cylindrical paths, 
\[
 \gamma \mapsto \gamma_1\amalg\gamma_2\amalg\dots\amalg\gamma_r ,
\]
with $\gamma_i\in\calC_{M_i}$
such that,  $M_1\geq K$, $M_1 > M_2 >\dots >M_r$ and, in addition, 
\[
 W_{\lambda_0} (\gamma ) \leq {\rm e}^{r \phi (1)  }
\prod W_{\lambda_0} (\gamma_i  ).
\]
It follows that
\[
\sum_{\gamma\in\calP_K} W_{\lambda_0}  (\gamma ) \leq {\rm e}^{ \phi (1)}
\max_{M\geq K}\bbC_{\lambda_0} (M) \times  \prod_{M=1}^\infty
\lb 1+ {\rm e}^{\phi (1)}\bbC_{\lambda_0 } (M)\rb .
\]
The target inequality \eqref{eq:PK} follows now from \eqref{eq:Cnot}.
\end{proof}

In view of \eqref{eq:Kd} and \eqref{eq:GKSet} the coarse-graining procedure of~\cite{IV08} implies that once Assumption \eqref{Assumption} holds, $\sum_xG_{\lambda_0} (x)$ is (even exponentially) convergent, which is in contradiction with~\eqref{eq-divergenceatcriticality}.
\end{proof}

\bibliographystyle{acmtrans-ims}
\bibliography{IV-EBPRev}

\end{document}